\documentclass[twoside]{article}

%
\usepackage[accepted]{aistats2022}
%


\usepackage[round]{natbib}

\usepackage{microtype}
\usepackage{graphicx}
\usepackage{booktabs} 
\usepackage{makecell}

\usepackage{amsmath,amssymb,dsfont,empheq}
\usepackage{algorithm}
\usepackage{algpseudocode}
\usepackage{syntonly,bm,wrapfig}
\usepackage{xcolor,cases}

\usepackage{hyperref}

\makeatletter
\def\thanks#1{\protected@xdef\@thanks{\@thanks
        \protect\footnotetext{#1}}}
\makeatother

\input{mysymbol.sty}


\DeclareMathOperator*{\argmin}{arg\,min}

\newcommand{\EE}{\mathbb{E}}
\DeclarePairedDelimiter{\dotp}{\langle}{\rangle}

\DeclareMathOperator{\Var}{Var}

\newcommand\numberthis{\addtocounter{equation}{1}\tag{\theequation}}

\newtheorem{theorem}{Theorem}
\newtheorem{lemma}[theorem]{Lemma}
\newtheorem{proposition}[theorem]{Proposition}

\newenvironment{customthm}[1]
  {\innercustomthm}
  {\endinnercustomthm}

\newcommand{\FullTitle}{A Single-Timescale Method for Stochastic Bilevel Optimization}

\begin{document}
\twocolumn[

\aistatstitle{\FullTitle}

\aistatsauthor{ Tianyi Chen$^{\star}$ \And Yuejiao Sun$^{\dagger}$  \And  Quan Xiao$^{\star}$ \And  Wotao Yin$^{\dagger}$} 
\vspace{0.3cm}
\aistatsaddress{ $^{\star}$Rensselaer Polytechnic Institute ~~~~~~~~\And~~~~~~~~  $^{\dagger}$University of California, Los Angeles}
]

\begin{abstract}%
\vspace{-0.3cm}
Stochastic bilevel optimization generalizes the classic stochastic optimization from the minimization of a single objective to the minimization of an objective function that depends on the solution of another optimization problem. 
Recently, bilevel optimization is regaining popularity in emerging machine learning applications such as hyper-parameter optimization and model-agnostic meta learning. 
To solve this class of optimization problems, existing methods require either double-loop or two-timescale updates, which are sometimes less efficient. This paper develops a new optimization method for a class of stochastic bilevel problems that we term Single-Timescale stochAstic BiLevEl optimization (\textbf{STABLE}) method. 
STABLE runs in a single loop fashion, and uses a single-timescale update with a fixed batch size. 
To achieve an $\epsilon$-stationary point of the bilevel problem, STABLE requires ${\cal O}(\epsilon^{-2})$ samples in total; and to achieve an $\epsilon$-optimal solution in the strongly convex case, STABLE requires ${\cal O}(\epsilon^{-1})$ samples. To the best of our knowledge, when STABLE was proposed, it is the \emph{first} bilevel optimization algorithm achieving the same order of sample complexity as SGD for single-level stochastic optimization. 
\end{abstract}
\vspace{-0.3cm}

\section{Introduction}

In this paper, we consider solving the stochastic optimization problems of the following form
\begin{subequations}\label{opt0}
	\begin{align}
	&\min_{x\in \mathcal{X}}~~~F(x):=\mathbb{E}_{\xi}\left[f\left(x, y^*(x);\xi\right)\right] ~~~~~~\, {\rm (upper)}\\
	&~{\rm s. t.}~~~~~y^*(x)\in \argmin_{y\in \mathbb{R}^{d_y}}~\mathbb{E}_{\phi}[g(x, y;\phi)] ~~~{\rm (lower)}\label{opt0-low}
\end{align}
\end{subequations}
where $f$ and $g$ are differentiable functions; $\xi$ and $\phi$ are random variables; and $\mathcal{X}\subset\mathbb{R}^d$ is closed and convex set. The problem \eqref{opt0} is often referred to as the stochastic \emph{bilevel} problem, where the upper-level optimization problem depends on the solution of the lower-level optimization over $y\in  \mathbb{R}^{d_y}$, denoted as $y^*(x)$, which depends on the value of upper-level variable $x\in \mathcal{X}$.

 Bilevel optimization can be viewed as a generalization of the classic two-stage stochastic programming \citep{shapiro2009}, in which the upper-level objective function depends on the optimal lower-level objective value rather than the lower-level solution.
Earlier works have studied applications in portfolio management and game theory \citep{stackelberg}; see two recent surveys \citep{dempe2020bilevel,liu2021investigating}. 
Recently, bilevel optimization has gained growing popularity in a number of machine learning applications such as meta-learning \citep{rajeswaran2019meta}, reinforcement learning \citep{konda1999actor,hong2020ac}, hyper-parameter optimization \citep{pedregosa2016hyperparameter,franceschi2018bilevel}, continual learning \citep{borsos2020coresets}, and image processing \citep{kunisch2013bilevel}. 
In some of these applications, when the lower-level problem admits a closed-form solution, bilevel optimization also reduces to the recently studied stochastic compositional optimization \citep{wang2017mp,ghadimi2020jopt,chen2020scsc}. 

Unlike single-level stochastic problems, algorithms tailored for solving bilevel stochastic problems are much less explored. This is partially because solving this class of problems via traditional optimization techniques faces a number of challenges. A key difficulty due to the nested structure is 
that (stochastic) gradient, a basic element in continuous optimization machinery, is prohibitively expensive or even impossible to compute. As we will show later, since computing an unbiased stochastic gradient of $F(x)$ requires solving the lower-level problem once, running stochastic gradient descent (SGD) on the upper-level problem essentially results in a double-loop algorithm which uses an iterative algorithm to solve the lower-level problem thousands or even millions of times.

 \begin{table*}[t]
	\begin{center}
	   \caption{Sample complexity of several state-of-the-art algorithms (BSA in \citep{ghadimi2018bilevel}, TTSA in \citep{hong2020ac}, stocBiO in \citep{ji2020provably}) to achieve an $\epsilon$-stationary point of $F(x)$ in the nonconvex setting and an $\epsilon$-optimal solution of $F(x)$ in the strongly convex setting; the notation $\widetilde{\cal O}(\cdot)$ hides logarithmic terms of $\epsilon^{-1}$.}
	   \vspace{0.1cm}
		\begin{tabular}{c||c|c | c|c }
			\hline
			\hline
 	& \textbf{STABLE}  & \textbf{BSA} &  \textbf{TTSA}  & \textbf{stocBiO}   \\ \hline
			\textbf{batch size} & ${\cal O}(1)$   & $\widetilde{\cal O}(1)$   & $\widetilde{\cal O}(1)$  & $\widetilde{\cal O}(\epsilon^{-1})$      \\ \hline
		 \textbf{\# of loops} & Single  & Double& Single   & Double      \\ \hline
		 \makecell{\textbf{\# of samples} \\  \textbf{(nonconvex)}}   & \makecell{ ${\cal O}(\epsilon^{-2})$ in $\xi$ \\ ${\cal O}(\epsilon^{-2})$ in $\phi$} & \makecell{ ${\cal O}(\epsilon^{-2})$ in $\xi$ \\ $\widetilde{\cal O}(\epsilon^{-3})$ in $\phi$}  & \makecell{ ${\cal O}(\epsilon^{-2.5})$ in $\xi$ \\ $\widetilde{\cal O}(\epsilon^{-2.5})$ in $\phi$}& \makecell{ ${\cal O}(\epsilon^{-2})$ in $\xi$ \\ $\widetilde{\cal O}(\epsilon^{-2})$ in $\phi$}  \\ \hline
		 		\makecell{ \textbf{\# of samples}\\ \textbf{(strongly convex)}}& \makecell{ ${\cal O}(\epsilon^{-1})$ in $\xi$ \\ ${\cal O}(\epsilon^{-1})$ in $\phi$} &  \makecell{${\cal O}(\epsilon^{-1})$ in $\xi$\\ $\widetilde{\cal O}(\epsilon^{-2})$ in $\phi$}  & \makecell{ ${\cal O}(\epsilon^{-1.5})$ in $\xi$ \\ $\widetilde{\cal O}(\epsilon^{-1.5})$ in $\phi$}&  /    \\ \hline
		 \textbf{complexity of $y$ update} & ${\cal O}(d_y^3)$  & ${\cal O}(d_y^2)$& ${\cal O}(d_y^2)$   & ${\cal O}(d_y^2)$      \\ \hline 		
		 	\hline
		\end{tabular}
	\end{center}
			\label{table:comp1}
			\vspace{-0.2cm}
\end{table*}

\vspace{-0.1cm} 
\subsection{Prior art}
\vspace{-0.1cm}
To put our work in context, we review prior art that we group in the following two categories.

\noindent\textbf{Bilevel optimization.} 
Bilevel optimization has a long history in operations research, where the lower level problem is served as the constraint of the upper level problem \citep{bracken1973mathematical,ye1995optimality,vicente1994bilevel,colson2007overview}.
Many recent efforts have been made to solve the bilevel optimization problems.
One successful approach is to reformulate the bilevel problem as a single-level problem by replacing the lower-level problem by its optimality conditions~\citep{colson2007overview,kunapuli2008}. Recently, gradient-based first-order methods for bilevel optimization have gained popularity, where the idea is to iteratively approximate the (stochastic) gradient of the upper-level problem either in forward or backward manner \citep{pedregosa2016hyperparameter,sabach2017jopt,franceschi2018bilevel,shaban2019truncated,grazzi2020iteration}. While most of these works assume the unique solution of the lower-level problem, cases where this assumption does not hold have been tackled in the recent work  \citep{liu2020icml}.
All these algorithms have excellent empirical performance, but many of them either provide no theoretical guarantees or only focus on the asymptotic performance analysis.  

The non-asymptotic analysis of bilevel optimization algorithms has been recently studied in some \emph{pioneering} works, e.g., \citep{ghadimi2018bilevel,hong2020ac,ji2020provably}, just to name a few. In both \citep{ghadimi2018bilevel,ji2020provably}, bilevel stochastic optimization algorithms have been developed that run in a double-loop manner.
To achieve an $\epsilon$-stationary point, they only need the sample complexity ${\cal O}(\epsilon^{-2})$ that is comparable to the complexity of SGD for the single-level case. 
Recently, a single-loop two-timescale stochastic approximation algorithm has been developed in \citep{hong2020ac} for the bilevel problem \eqref{opt0}. Due to the nature of two-timescale update, it incurs the sub-optimal sample complexity ${\cal O}(\epsilon^{-2.5})$. Therefore, the existing single-loop solvers for bilevel problems are significantly slower than those for problems without bilevel compositions, but otherwise share many structures and properties. 

\noindent\textbf{Concurrent work.}
After our STABLE was developed and released, its rate of convergence was improved to ${\cal O}(\epsilon^{-1.5})$ by momentum accelerations in \citep{khanduri2021momentum,guo2021randomized,yang2021provably}. The adaptive gradient variant has been studied in \citep{huang2021biadam}. Besides, a tighter analysis for alternating stochastic gradient descent (ALSET) method was proposed in \citep{chen2021closing}. The contributions compared to ALSET are: (a) ALSET uses SGD on the lower level but STABLE has a correction term, so STABLE has a reduced stochastic oracle complexity; (b) STABLE can handle the \emph{constrained} upper-level problem using {\em Moreau envelop}.

\noindent\textbf{Stochastic compositional optimization.}
When the lower-level problem in \eqref{opt0-low} admits a smooth closed-form solution, the bilevel problem \eqref{opt0} reduces to stochastic compositional optimization.
Popular approaches tackling this class of problems use two sequences of variables being updated in two different time scales \citep{wang2017mp,wang2017jmlr}.
However, the complexity of \citep{wang2017mp} and \citep{wang2017jmlr} is worse than ${\cal O}(\epsilon^{-2})$ of SGD for the non-compositional case.  
Building upon recent variance-reduction techniques,  
variance-reduced methods have been developed to solve a special class of the stochastic compositional problem with the \emph{finite-sum structure}, e.g., \citep{lian2017aistats,zhang2019nips}, but they usually operate in a double-loop manner. Other
related compositional algorithms also include \citep{tran2020stochastic,hu2020}.

While most of existing algorithms rely on either two-timescale or double-loop updates, the single-timescale  single-loop approaches have been recently developed in \citep{ghadimi2020jopt,chen2020scsc}, which achieve the sample complexity ${\cal O}(\epsilon^{-2})$. These encouraging recent results imply that solving stochastic compositional optimization is nearly as easy as solving stochastic optimization.

Our work is also related to the stochastic min-max optimization; see e.g., \citep{daskalakis2018limit,luo2020stochastic,rafique2018non,mokhtari2020unified,lin2020gradient,nouiehed2019solving}.
However, whether the techniques used in compositional and min-max optimization permeate to solving more challenging bilevel problems remains unknown. This paper is devoted to answering this question.

\vspace{-0.2cm}
\subsection{Our contributions}
\vspace{-0.1cm}
To this end, this paper aims to develop a \emph{single-loop single-timescale} stochastic algorithm, which, for the class of smooth bilevel problems, can match the sample complexity of SGD for single-level stochastic optimization problems. In the context of existing methods, our contributions can be summarized as follows. 
 
\noindent\textbf{C1)}  We develop a new stochastic gradient estimator tailored for a certain class of stochastic bilevel problems, which is motivated by an ODE analysis for the corresponding continuous-time deterministic problems. Our new stochastic bilevel gradient estimator is flexible to combine with any existing stochastic optimization algorithms for the single-level problems, and solve this class of stochastic bilevel problems as sample-efficient as single-level problems. 
  
\noindent\textbf{C2)} When we combine this stochastic gradient estimator with SGD for the upper-level update, we term it as the Single-Timescale stochAstic BiLevEl optimization (\textbf{STABLE}) method. 
	In the nonconvex case, to achieve $\epsilon$-stationary point of \eqref{opt0}, STABLE only requires ${\cal O}(\epsilon^{-2})$ samples in total. In the strongly convex case, to achieve $\epsilon$-optimal solution of \eqref{opt0}, STABLE only requires ${\cal O}(\epsilon^{-1})$ samples. This is achieved by designing a new Lyapunov function. To the best of our knowledge, when STABLE was proposed, it is the \emph{first} bilevel algorithm achieving the order of sample complexity as SGD. See the sample complexity of state-of-the-art algorithms in Table \ref{table:comp1}.

 \noindent\textbf{Trade-off and limitations.} 
 While our new bilevel algorithm significantly improves the sample complexity of existing algorithms, it pays the price of additional computation per iteration. Specifically, in order to better estimate the stochastic bilevel gradient, a matrix inversion and an eigenvalue truncation are needed per iteration, which cost ${\cal O}(d^3)$ computation for a $d\times d$  matrix. In contrast, some of recent works \citep{ghadimi2018bilevel,hong2020ac,ji2020provably} reduce matrix inversion to more efficient computations of matrix-vector products, which cost ${\cal O}(d^2)$ computation per iteration. Therefore, our algorithm is preferable in the regime where the sampling is more costly than computation or the dimension $d$ is relatively small. 
 
\section{A Single-timescale Optimization Method for Bilevel Problems}
\label{sec.scg}
 
In this section, we will first provide background of bilevel problems, and then present our  stochastic bilevel gradient method, followed by an ODE analysis to highlight the intuition of our design.  

\subsection{Preliminaries}
We use $\|\cdot \|$ to denote the $\ell_2$ norm for vectors and Frobenius norm for matrices.
We use ${\cal F}^k$ to denote the collection of random variables, i.e., ${\cal F}^k:=\left\{\phi^0, \ldots, \phi^{k-1}, \xi^0, \ldots, \xi^{k-1}\right\}$. We define the deterministic version of \eqref{opt0} without constraint on $\mathcal{X}$ as
\begin{align}\label{opt0-deter}
&\min_{x\in \mathbb{R}^d}F(x):=f\left(x, y^*(x)\right)\nonumber\\
&~{\rm s.t.}~y^*(x)\in \argmin_{y\in \mathbb{R}^{d_y}} g(x, y)
\end{align}
where the functions are defined as $g(x, y):=\mathbb{E}_{\phi}[g(x, y;\phi)]$ and $f(x, y):=\mathbb{E}_{\xi}[f(x, y;\xi)]$.

We also define $\nabla_{yy}^2 g\left(x, y\right)$ as the Hessian matrix of $g$ with respect to $y$ and define $  \nabla_{xy}^2g\left(x, y\right)$ as
\begin{align*}
  \nabla_{xy}^2g\left(x, y\right) :=    \begin{bmatrix}
    \frac{\partial^2}{\partial x_1\partial y_1 }g\left(x, y\right) & \cdots & \frac{\partial^2}{\partial x_1\partial y_{d_y}}g\left(x, y\right)\\
    & \cdots & \\
    \frac{\partial^2}{\partial x_d\partial y_1 }g\left(x, y\right) & \cdots & \frac{\partial^2}{ \partial x_d\partial y_{d_y}}g\left(x, y\right)
  \end{bmatrix}.
\end{align*}


We make the following standard assumptions that are commonly used in stochastic bilevel optimization literature \citep{ghadimi2018bilevel,hong2020ac,ji2020provably,khanduri2021momentum,guo2021randomized}. 

\vspace{0.1cm}
\noindent\textbf{Assumption 1 (Lipschitz continuity).}
\emph{
For any $x$, $\nabla_x f(x,\cdot)$, $\nabla_y f(x,\cdot)$, $\nabla_y g(x,y)$, $\nabla^2_{xy}g(x,\cdot;\phi)$, $\nabla^2_{yy}g(x,\cdot;\phi)$ are $L_{f_x}, L_{f_y}, L_g, L_{g_{xy}}, L_{g_{yy}}$-Lipschitz continuous. For any fixed $y$, $\nabla_x f(\cdot, y; \xi)$, $\nabla_y f(\cdot, y; \xi)$, $\nabla^2_{xy}g(\cdot, y;\phi)$, $\nabla^2_{yy}g(\cdot, y;\phi)$ are $\bar L_{f_x}, \bar L_{f_y}, \bar L_{g_{xy}}, \bar L_{g_{yy}}$-Lipschitz continuous.}

\noindent\textbf{Assumption 2 (strong convexity of lower-level objective).}
\emph{For any fixed $x$, $g(x,y)$ is $\mu_g$-strongly convex in $y$, that is, $\nabla^2_{yy}g(x,y)\succeq \mu_g I$.}

Assumptions 1 and 2 together ensure that the first- and second-order derivations of $f(x,y), g(x,y)$ as well as the solution mapping $y^*(x)$ are well-behaved. 

\noindent\textbf{Assumption 3 (stochastic derivatives).}
\emph{The stochastic derivatives $\nabla_x f(x,y;\xi)$, $\nabla_y f(x,y;\xi)$, $\nabla_y g(x,y;\phi)$, $\nabla_{xy}^2g(x, y, \phi)$, and $\nabla_{yy}^2g(x, y, \phi)$ are unbiased estimators of $\nabla_x f(x,y)$, $\nabla_y f(x,y)$, $\nabla_y g(x,y)$, $\nabla_{xy}^2g(x, y)$, and $\nabla_{yy}^2g(x, y)$, respectively; and their variances are bounded by $\sigma_{f_x}^2, \sigma_{f_y}^2$, $\sigma_{g_y}^2$, $\sigma_{g_{xy}}^2, \sigma_{g_{yy}}^2$, respectively. Moreover, their moments are bounded by }
\begin{subequations}
\begin{align}\label{eq.assp_grad_norm}
&\EE_{\xi}[\|\nabla_x f(x,y;\xi)\|^p]\leq C_{f_x}^p,~p=2, 4\nonumber\\
&\EE_{\xi}[\|\nabla_y f(x,y;\xi)\|^p]\leq C_{f_y}^p,~p=2, 4\\
	&\EE_{\phi}[\|\nabla^2_{xy} g(x,y;\phi)\|^2]\leq  C_{g_{xy}}^2,\nonumber\\
	&\EE_{\phi}[\|\nabla^2_{yy} g(x,y;\phi)\|^2]\leq C_{g_{yy}}^2.
\end{align}
\end{subequations}

Assumption 3 is the counterpart of the unbiasedness and bounded variance assumption in the single-level stochastic optimization, which are standard also in \citep{ghadimi2018bilevel,hong2020ac}. In addition, the bounded moments in Assumption 3 ensure the Lipschitz continuity of the upper-level gradient $\nabla F(x)$. 



We first highlight the inherent challenge of directly applying the single-level SGD method \citep{robbins1951} to the bilevel problem \eqref{opt0}. 
To illustrate this point, we derive the gradient of the upper-level function $F(x)$ in the next proposition by analyzing the lower-level optimality condition; see the proof in Appendix C. 
\begin{proposition}\label{prop1}
Under Assumption 2, we have the gradients
\begin{subequations}\label{grad-deter-2}
	\begin{align}
\!\!\!\!&\nabla_xy^*(x)^{\top}\!:=\!-\nabla_{xy}^2g(x, y^*(x))\!\left[\nabla_{yy}^2 g(x, y^*(x))\right]^{-1}\!\!\!\label{grad-deter-2-1}\\ 
\!\!\!\!&\nabla F(x)=\nabla_xf(x, y^*(x))+\nabla_xy^*(x)^{\top}\nabla_y f(x, y^*(x)).\!\label{grad-deter-2-2}
\end{align}
\end{subequations}
\end{proposition}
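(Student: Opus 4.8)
The plan is to treat $y^*(x)$ through the implicit function theorem and then apply the chain rule to the composition $F(x)=f(x,y^*(x))$. The whole argument rests on two facts: that Assumption 2 guarantees $\nabla_{yy}^2 g(x,y^*(x))\succeq\mu_g I$ is invertible, so $y^*(x)$ is the unique lower-level minimizer and (together with the smoothness in Assumption 1) is differentiable in $x$; and that this minimizer satisfies the stationarity condition $\nabla_y g(x,y^*(x))=0$ as an identity in $x$.

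First I would establish (\ref{grad-deter-2-1}). Since $\nabla_y g(x,y^*(x))=0$ holds for every $x$, I differentiate this vector identity with respect to $x$. Writing out the $(j,i)$ Jacobian entry and using the chain rule gives
\begin{align*}
0=\frac{\partial}{\partial x_i}\big[\partial_{y_j} g(x,y^*(x))\big]=\frac{\partial^2 g}{\partial x_i\partial y_j}+\sum_{k=1}^{d_y}\frac{\partial^2 g}{\partial y_k\partial y_j}\,\frac{\partial y_k^*}{\partial x_i},
\end{align*}
which in matrix form (all derivatives evaluated at $(x,y^*(x))$) reads $0=\nabla_{xy}^2 g(x,y^*(x))^{\top}+\nabla_{yy}^2 g(x,y^*(x))\,\nabla_x y^*(x)$, where I adopt the convention that $\nabla_x y^*(x)$ is the $d_y\times d$ Jacobian consistent with the matrix shape of $\nabla_{xy}^2 g$ fixed by the definition above. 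Solving for $\nabla_x y^*(x)$ using the invertibility of $\nabla_{yy}^2 g$ and then transposing (the Hessian $\nabla_{yy}^2 g$ being symmetric, so is its inverse) yields $\nabla_x y^*(x)^{\top}=-\nabla_{xy}^2 g(x,y^*(x))[\nabla_{yy}^2 g(x,y^*(x))]^{-1}$, which is exactly (\ref{grad-deter-2-1}).

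The second identity (\ref{grad-deter-2-2}) then follows by applying the chain rule to $F(x)=f(x,y^*(x))$: the direct dependence on $x$ contributes $\nabla_x f(x,y^*(x))$ and the indirect dependence through $y^*$ contributes $\nabla_x y^*(x)^{\top}\nabla_y f(x,y^*(x))$, and substituting the expression for $\nabla_x y^*(x)^{\top}$ just derived completes the claim.

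The main obstacle here is not analytic but a matter of bookkeeping and rigor: one must justify the differentiability of $y^*(\cdot)$ carefully by invoking the implicit function theorem (licensed by Assumption 2 together with Assumption 1), and one must track transposes and matrix shapes, since the paper's definition of $\nabla_{xy}^2 g$ as a $d\times d_y$ matrix is what forces the transpose appearing in (\ref{grad-deter-2-1}). All shapes can be sanity-checked against the final claim: $\nabla_{xy}^2 g$ is $d\times d_y$ and $[\nabla_{yy}^2 g]^{-1}$ is $d_y\times d_y$, so $\nabla_x y^*(x)^{\top}$ is $d\times d_y$ and $\nabla F(x)\in\mathbb{R}^d$, as required.
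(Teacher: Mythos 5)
Your proof is correct and follows essentially the same route as the paper's: differentiate the lower-level stationarity condition $\nabla_y g(x,y^*(x))=0$ via the chain rule, solve for the Jacobian using the invertibility of $\nabla_{yy}^2 g$ guaranteed by Assumption 2, and substitute into the chain-rule expansion of $\nabla F$. The only difference is cosmetic — you make the implicit-function-theorem justification of differentiability and the transpose bookkeeping explicit, which the paper leaves implicit.
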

Note that the gradient $\nabla F(x)$ contains the second-order information of the lower-level problem $g(x, y)$ since it depends on the sensitivity of the lower-level solution $y^*(x)$. The sensitivity of the solution $y^*(x)$ for a strongly-convex program has also been explored in the time-varying convex optimization literature through the lens of perturbation analysis; see e.g., \citep{simonetto2016class}. 
Therefore, we hope that the sample complexity results of bilevel optimization in this paper will also stimulate future research in time-varying convex optimization.

In addition, notice that obtaining an unbiased stochastic estimate of $\nabla F(x)$ and applying SGD on $x$ face two main difficulties: \textbf{(D1)} the gradient $\nabla F(x)$ at $x$ depends on the minimizer of the lower-level problem $y^*(x)$; \textbf{(D2)} even if $y^*(x)$ is known, it is hard to apply the stochastic approximation to obtain an unbiased estimate of $\nabla F(x)$ since $\nabla F(x)$ is nonlinear in $\nabla_{yy}^2 g(x, y^*(x))$; see the discussion of (D2) in stochastic compositional optimization literature, e.g., \citep{wang2017mp,chen2020scsc}. 


Similar to some existing algorithms for bilevel problems, our method addresses (D1) by evaluating $\nabla F(x)$ on a certain vector $y$ in place of $y^*(x)$, but it differs in how to recursively update $y$ and how to address (D2).  
Resembling the definition \eqref{grad-deter-2} with $y^*(x)$ replaced by $y$, we introduce the notation 
\begin{align}\label{grad-deter-3}
\overline{\nabla}_xf\left(x, y\right):=&\nabla_xf\left(x, y\right)-\nabla_{xy}^2g\left(x, y\right)\times\nonumber\\ &\left[\nabla_{yy}^2 g\left(x, y\right)\right]^{-1}\nabla_y f\left(x, y\right). 
\end{align}
As we will show in Lemma \ref{lemma_lip} of Appendix, Assumptions 1-3 ensure that $\nabla F(\cdot)$, $\overline{\nabla}_x f(x,\cdot)$, and $y^*(\cdot)$ are all Lipschitz continuous with constants $L_F, L_f, L_y$, respectively.

\subsection{A single-timescale bilevel method}

Before we present our method, we first review a successful recent effort.
To overcome the difficulty of applying plain-vanilla SGD, a \emph{two-timescale} stochastic approximation (TTSA) algorithm has been recently developed in \citep{hong2020ac}. 
TTSA is a single-loop algorithm and amenable to efficient implementation. 
It consists of two sequences $\{x^k\}$ and $\{y^k\}$: for a given $x^k$, $y^k$ estimates the minimizer $y^*(x^k)$; and, $x^k$ estimates the minimizer $x^*$. 
For notational brevity, we define
\begin{align*}
&h_g^k := \nabla_yg(x^k,y^k;\phi^k),~~~   h_{yy}^k(\phi):=\nabla_{yy}^2 g(x^k,y^k;\phi),\\
&h_{xy}^k(\phi):=\nabla_{xy}^2g(x^k,y^k;\phi).\numberthis
\end{align*}
With $\alpha_k$ and $\beta_k$ denoting two sequences of stepsizes, the TTSA recursion is given by 
\begin{subequations}\label{eq.TTSA}
	\begin{align}
	y^{k+1}&=y^k-\beta_k h_g^k   \label{eq.TTSA-1}\\
	x^{k+1}&=\mathcal{P}_\mathcal{X}\left(x^k-\alpha_k \left(\nabla_x f(x^k, y^k;\xi^k)\right.\right.\nonumber\\
	&\quad\left. \left.-h_{xy}^k(\phi^k)\nabla_{yy}^{-1}\nabla_y f(x^k, y^k;\xi^k)\right)\right) \label{eq.TTSA-2}
\end{align}
\end{subequations}
where $\nabla_{yy}^{-1}$ is a mini-batch approximation of $\left[\nabla_{yy}^2 g(x^k, y^k)\right]^{-1}$. 
The timescale separation refers to the different order of stepsizes used in updating multiple variables. 
To ensure convergence, TTSA requires $y^k$ to be updated in a timescale faster than that of $x^k$ so that $x^k$ is relatively static with respect to $y^k$; i.e., $\lim_{k\rightarrow \infty}\alpha_k/\beta_k=0$ \citep{hong2020ac}. This is termed the two-timescale update.
However, this prevents TTSA from choosing the stepsize ${\cal O}(1/\sqrt{k})$ as SGD, and also results in its \emph{suboptimal complexity}.

\begin{algorithm}[t]
\caption{STABLE for stochastic bilevel problems}\label{alg: STABLE}
    \begin{algorithmic}[1]
    \State{\textbf{initialize:} $x^0, y^0, H_{xy}^0, H_{yy}^0$,  stepsizes $\{\alpha_k, \beta_k\}$.} 
        \For{$k=0,1,\ldots, K-1$}
           \State{compute $h_{xy}^{k-1}(\phi^k)$ and $h_{xy}^k(\phi^k)$}\\~~~\Comment{randomly select datum $\phi^k$}
            \State{update $H_{xy}^k$ via \eqref{eq.STABLE-H1} }
           \State{compute $h_{yy}^{k-1}(\phi^k)$ and $h_{yy}^k(\phi^k)$}
            \State{update $H_{yy}^k$ via \eqref{eq.STABLE-H3} }
           \State{compute $\nabla_xf\left(x^k, y^k;\xi^k\right)$, $\nabla_y f\left(x^k, y^k;\xi^k\right)$}\\~~~\Comment{randomly select datum $\xi^k$}
            \State{update $x^k$ and $y^k$ via \eqref{eq.STABLE}}
        \EndFor
    \end{algorithmic}
  \end{algorithm}

We find that the key reason preventing TTSA from using a single-timescale update is its undesired stochastic upper-level gradient estimator \eqref{eq.TTSA-2} that uses an inaccurate lower-level variable $y^k$ to approximate $y^*(x^k)$. 
With more insights given in Section \ref{subsec.ode}, we propose a new stochastic bilevel optimization method based on a new stochastic bilevel gradient estimator, which we term Single-Timescale stochAstic BiLevEl optimization (\textbf{STABLE}) method.  
Its recursion is given by
\begin{subequations}\label{eq.STABLE}
	\begin{align}
\!\!\!\!    x^{k+1}&=\mathcal{P}_\mathcal{X}\left(x^k \!-\! \alpha_k \left(\nabla_xf(x^k, y^k;\xi^k)\right.\right.\nonumber\\
&\quad-\left.\left.H_{xy}^k(H_{yy}^k)^{-1}\nabla_y f(x^k, y^k;\xi^k)\right) \right)\label{eq.STABLE2}\\
\!\!\!\!   y^{k+1}&=y^k \!-\beta_k h_g^k - (H_{yy}^k)^{-1}(H_{xy}^k)^{\top}(x^{k+1}-x^k).\label{eq.STABLE3}
\end{align}
\end{subequations}
where ${\cal P}_{\cal X}$ denotes the projection on set $\cal X$. In \eqref{eq.STABLE}, the estimates of second-order derivatives are updated as (with stepsize $\tau_k>0$)
\begin{subequations}\label{eq.STABLE-H}
\begin{align}
	\!\! \!	H_{xy}^k&=\overline{\cal P}\left((1-\tau_k)\Big( H_{xy}^{k-1}\!-\!h_{xy}^{k-1}(\phi^k)\Big) \!+\!h_{xy}^k(\phi^k) \right)    \!\!              \label{eq.STABLE-H1}\\
\!\!\! H_{yy}^k&=  \underline{\cal P}\left((1-\tau_k) \Big( H_{yy}^{k-1}\!-\!h_{yy}^{k-1}(\phi^k)\Big)\!+\!h_{yy}^k(\phi^k)\right)	 \!\! \label{eq.STABLE-H3}
\end{align}
\end{subequations}
where $\overline{\cal P}$ is the projection to set $\{X:\|X\|\leq C_{g_{xy}}\}$ and $\underline{\cal P}$ is the projection to set $\{X: X\succeq \mu_g I\}$.

Compared with \eqref{eq.TTSA} and other existing algorithms, the unique features of STABLE lie in: \textbf{(F1)} its $y^k$-update that will be shown to better ``predict'' the next $y^*(x^{k+1})$; and, \textbf{(F2)} a recursive update of $H_{xy}^k, H_{yy}^k$ that is motivated by the advanced variance reduction techniques for single-level nonconvex optimization problems such as STORM \citep{cutkosky2019momentum}, Hybrid SGD\citep{tran2021hybrid} and the recent stochastic compositional optimization method \citep{chen2020scsc}. The marriage of (F1)-(F2) enables STABLE to have a better estimate of $\nabla F(x^k)$, which is responsible for its improved convergence.
Note that we use three stepsizes $\alpha_k$, $\beta_k$ and $\tau_k$ in \eqref{eq.STABLE}, we call our method a single-timescale algorithm because the upper- and lower-level variables use the same order of stepsizes that decrease at the same rate as that of SGD. 
As we will show later, for a class of bilevel problems, the single-timescale recursion \eqref{eq.STABLE} achieves the same convergence rate as SGD for single-level problems. See a summary of STABLE in Algorithm \ref{alg: STABLE}.

\textbf{Remark.}
The projection in \eqref{eq.STABLE-H} is introduced for our current analysis. 
However, projection in \eqref{eq.STABLE-H1} is not uncommon in stochastic algorithms to ensure stability, and the eigenvalue truncation in \eqref{eq.STABLE-H3} is a usual subroutine in Newton-based methods, which is also referred to the positive definite truncation \citep{nocedal2006numerical,paternain2019newton}. 
One potential way to avoid it is to replace \eqref{eq.STABLE-H} with a trust region computation.



\subsection{Continuous-time ODE analysis}\label{subsec.ode} 
Similar to the stochastic compositional optimization \citep{chen2020scsc}, we provide some intuition of our algorithm design via an ODE for the deterministic problem \eqref{opt0-deter}. 
To minimize $F(x)$, we use an ODE analysis to design a continuous dynamic
\begin{equation}\label{eq.xdyn}
    \dot{x}(t) = -\alpha {\cal T}(x(t),y(x(t)))
\end{equation}
by choosing an operator ${\cal T}$.
For single-level minimization of a smooth function $h(x(t))$, one can use the gradient flow $\dot{x}(t) = - \alpha \nabla h(x(t))$. For bilevel minimization \eqref{opt0-deter}, however, we shall avoid ${\cal T}(x,y) = \nabla_x \left(f(x,y^*(x)\right)$ and instead use $y$ to approximate $y^*(x)$. Here note that we have dropped $(t)$ for conciseness.
Hence, define the operator as
\begin{align}\label{eq.ode_x}
   {\cal T}(x,y):=& \nabla_x f(x,y)\!-\!\nabla_{xy}^2g(x, y)[\nabla_{yy}^2 g(x, y)]^{-1}\times\nonumber\\
   &\nabla_yf(x,y) 
 \stackrel{\eqref{grad-deter-3}}{=} \overline{\nabla}_x f(x,y). 
\end{align}

Here, the variable $y$ follows another dynamic that we specify below, which accompanies the $x$-dynamic \eqref{eq.xdyn}.
We will also find a Lyapunov function $V$ such that 
\begin{center}
	{\bf (C1)} $\dot V< 0$; ~~~~~~~~~~~~~~~~~~~~~~~~~~~~~~~~~~~~~~~~~~~~~~~~~~~~\\ 
	
	{\bf (C2)} $\dot V=0$ if and only if $\nabla F(x)=0$ and $y=y^*(x)$.
\end{center} 
If the $\dot{x}$ and $\dot{y}$ dynamics drive an appropriate Lyapunov function $V$ satisfying (C1) and (C2), then $x$ converges to a stationary point of the upper-level problem and $y$ converges to the solution of the lower-level problem. 
 
We first state the results for the continuous-time dynamics below. 
\begin{customthm}{1}[Continuous-time dynamics]\label{prop2}
If we define the $x$- and $y$-dynamics as 
	\begin{align}\label{eq.ode_stable}
    \dot{x}&= -\alpha \nabla_x f(x,y)\!-\alpha\nabla_{xy}^2g(x, y)[\nabla_{yy}^2 g(x, y)]^{-1}\nabla_yf(x,y)\nonumber\\
    \dot{y}& = -\beta\nabla_yg(x,y) - \left[\nabla_{yy}^2 g\left(x, y\right)\right]^{-1}\nabla_{yx}^2g\left(x, y\right)\dot{x}
\end{align}
and choose the constants $\alpha$ and $\beta$ appropriately, then there exists a Lyapunov function $V$ of the $x$- and $y$-dynamics that satisfies (C1) and (C2).
\end{customthm}

\begin{proof}
To highlight the intuition, we provide a constructive proof of this theorem. 
We first try $V_0:= f(x,y^*(x))$. To clarify, we can use $y^*(x)$ in a Lyapunov function but not in a dynamic to evolve a quantity. In this case, we have
\begin{align*}
   \dot{V}_0&=\dotp{\nabla_xf(x,y^*(x)), \dot{x}
    } + \dotp{\nabla_yf(x,y^*(x)), \nabla_xy^*(x)\dot{x}} \\
    &=\dotp{\nabla_x f(x,y^*(x))+\nabla_xy^*(x)^\top\nabla_yf(x,y^*(x)),\dot{x}}.
\end{align*}

\begin{figure}[t]
    \centering
    \includegraphics[width=.42\textwidth]{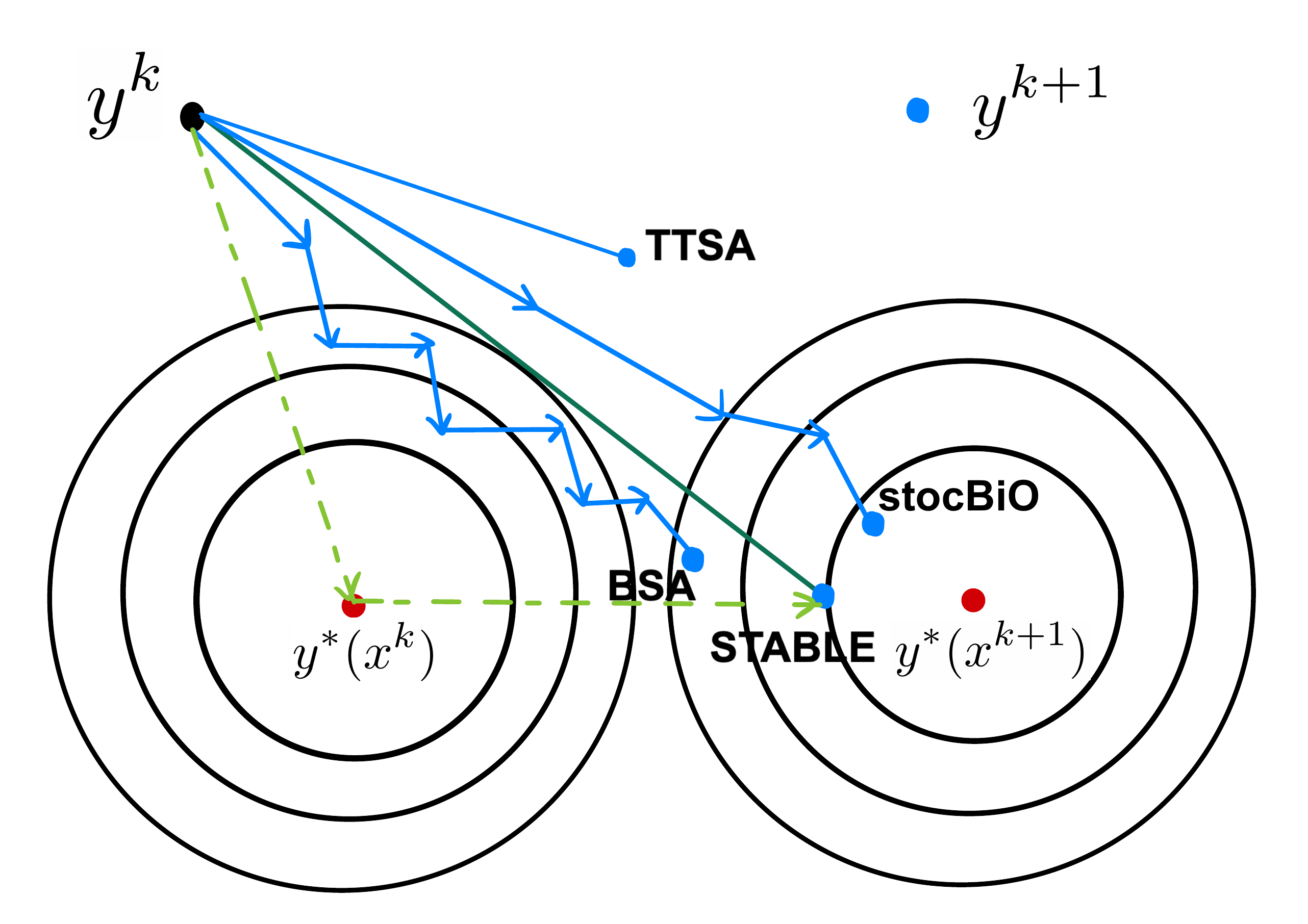}
    \caption{A geometric illustration of the $y^k$ update under the state-of-the-art algorithms; black dot represents $y^k$, red dots represent the lower-level solution $y^*(x^k)$ and $y^*(x^{k+1})$, blue dots represent $y^{k+1}$ under different algorithms, and blue arrow denotes the inner loop updates. \textbf{STABLE} updates $y^k$ by linearly combining the stochastic gradient direction towards $y^*(x^k)$ and the moving direction from $y^*(x^k)$ to $y^*(x^{k+1})$. 
 In contrast, \textbf{BSA} \citep{ghadimi2018bilevel} runs multiple stochastic gradient steps; \textbf{TTSA} \citep{hong2020ac} runs one stochastic gradient step with a smaller stepsize; \textbf{stocBiO} \citep{ji2020provably} runs multiple stochastic gradient steps with an increasing batch size.
    }\label{fig:intuition}
\end{figure}

Recall the definition in \eqref{grad-deter-2}. Then we have
\begin{align*}\label{eq.dot-lyap0}
   \dot{V}_0& = -\alpha \dotp{{\cal T}(x,y^*(x)),{\cal T}(x,y)}\\
   & \stackrel{(a)}{\leq} -\alpha \|{\cal T}(x,y^*(x))\|^2 \nonumber\\
   &\quad+ \alpha \|\overline{\nabla}_x f(x,y) - \overline{\nabla}_x f(x,y^*(x))\|\|{\cal T}(x,y^*(x))\|\\
   & \stackrel{(b)}{\leq} -\alpha \|{\cal T}(x,y^*(x))\|^2\nonumber\\
   &\quad+ \alpha L_f\|y - y^*(x)\|\|{\cal T}(x,y^*(x))\|\\
   &\stackrel{(c)}{\leq} -\frac{\alpha}{2} \|{\cal T}(x,y^*(x))\|^2 + \frac{\alpha L_f^2}{2}\|y - y^*(x)\|^2
    \numberthis
\end{align*}
where (a) uses the Cauchy-Schwarz inequality, (b) follows from the $L_f$-Lipschitz continuity of $\overline{\nabla}_x f(x,\cdot)$ established in Lemma \ref{lemma_lip}, and (c) is due to the Young's inequality.  

To satisfy (C1), we have $\dot{V}_0\le 0$ only if $L_f\|y - y^*(x)\|\le \|{\cal T}(x,y^*(x))\|$, thus, requiring the information of $\|y - y^*(x)\|$ --- not doable without knowing $y^*(x)$.

Let us try to mitigate the term $\|y(x) - y^*(x)\|^2$ by defining the following new Lyapunov function:
\begin{align*}
  V:&= f(x,y^*(x)) + \frac{1}{2}\|y-y^*(x)\|^2  \numberthis \label{eq.cont-lyap}
\end{align*}
which implies that
\small
\begin{align*}
\!\!\!\dot{V}&\!=\!-\alpha \dotp{{\cal T}(x,y^*(x)),{\cal T}(x,y)}  \! +\! \dotp{y-y^*(x), \dot{y} - \nabla_xy^*(x)\dot{x}}\\
    &\stackrel{\eqref{eq.dot-lyap0}}{\leq} -\frac{\alpha}{2} \|{\cal T}(x,y^*(x))\|^2 + \frac{\alpha L_f^2}{2}\|y - y^*(x)\|^2\nonumber\\
    &~~~  + \dotp{y-y^*(x), \dot{y} - \nabla_xy^*(x)\dot{x}}\numberthis \label{eq.ode-V3-2}\\
 &\leq -\frac{\alpha}{2} \|{\cal T}(x,y^*(x))\|^2 -\left(\beta- \frac{\alpha L_f^2}{2}\right)\|y - y^*(x)\|^2\\
    &\quad+ \dotp{y-y^*(x), \dot{y}+\beta(y - y^*(x)) - \nabla_xy^*(x)\dot{x}}\! \! \numberthis\label{eq.ode-V3}
\end{align*}
\normalsize
where $\beta>0$ is a fixed constant.
The first two terms in the RHS of \eqref{eq.ode-V3} are non-positive given that $\alpha\geq 0$ and $\beta\geq {\alpha L_f^2}/{2}$, but the last term can be either positive or negative. To control the last term and thus ensure the descent of $V(t)$, we are motivated to use a $y$-dynamic like
\begin{equation}\label{eq.ode-y0}
    \dot{y} 
    \approx -\beta(y - y^*(x)) + \nabla_xy^*(x)\dot{x}.
\end{equation}
To avoid using $y^*$ in a dynamic, we approximate $y - y^*(x)$ by $\nabla_yg(x,y)$ and $\nabla_xy^*(x)$ by (cf. \eqref{grad-deter-2-1})
\begin{equation}\label{eq.ode-y1}
    \nabla_xy(x):= - \left[\nabla_{yy}^2 g\left(x, y\right)\right]^{-1}\nabla_{xy}^2g\left(x, y\right).
\end{equation}
These choices lead to the $y$-dynamics:
\begin{equation}\label{eq.ode-y2}
    \dot{y} = -\beta\nabla_yg(x,y) + \nabla_xy(x)\dot{x}.
\end{equation}

Although we approximate \eqref{eq.ode-y0} by \eqref{eq.ode-y2}, we will plug $y$-dynamics \eqref{eq.ode-y2} into \eqref{eq.ode-V3} and show that $V$ satisfies (C1). 
Specifically, plugging \eqref{eq.ode-y2} into \eqref{eq.ode-V3-2} leads to
\begin{align*}\label{eq.ode-inner}
	 &\dotp{y-y^*(x), \dot{y} - \nabla_xy^*(x)\dot{x}} \nonumber\\
	=& -\dotp{y-y^*(x),  \beta\nabla_yg(x,y) \\
	&- \nabla_xy(x)\dot{x} +\nabla_xy^*(x)\dot{x}}.\numberthis
\end{align*}
As $g(x,\cdot)$ is $\mu_g$-strongly convex by Assumption 2, we have
\small
\begin{equation}\label{eq.ode-ysc}
	 \big\langle y-y^*(x), \nabla_y g(x,y) -\nabla_y g(x,y^*(x))\big\rangle\geq \mu_g\|y-y^*(x)\|^2
\end{equation}
\normalsize
where $\nabla_y g(x,y^*(x)) = 0$ as $y^*(x)$ minimizes $g(x,y)$.

Therefore, plugging \eqref{eq.ode-ysc} into \eqref{eq.ode-inner}, we have
\vspace{-0.1cm}
\begin{align*}\label{eq.ode-inner2}
 &\dotp{y-y^*(x), \dot{y} - \nabla_xy^*(x)\dot{x}}\\
\le &  \!-\!\dotp{y-y^*(x),  ( \nabla_xy^*(x)\!-\! \nabla_xy(x))\dot{x}}\!-\!\beta \mu_g\|y-y^*(x)\|^2\\
\le & \|y-y^*(x)\|\|\nabla _xy^*(x) - \nabla_xy(x)\|\|\dot{x}\|-\!\beta \mu_g\|y-y^*(x)\|^2\\
\le & \alpha B_x L_y\|y-y^*(x)\|^2-\!\beta \mu_g\|y-y^*(x)\|^2\numberthis
\end{align*}

where the second inequality uses the Cauchy-Schwarz inequality, and the last inequality follows the bound $B_x$ of $\|\dot{x}\|$ and the Lipschitz constant $L_y$ of $\nabla_xy(x)$, both of which can be derived from Assumptions 1--3. 

Now plugging \eqref{eq.ode-inner2} into \eqref{eq.ode-V3-2}, we have
\begin{align*}
    \dot{V}   \leq &-\frac{\alpha}{2}\|{\cal T}(x,y^*(x))\|^2 \nonumber\\
    &- \left(\beta\mu_g-\frac{\alpha L_f^2}{2} - \alpha B_x L_y\right)\|y-y^*(x)\|^2.\numberthis
\end{align*}

Now let us check (C1) and (C2). To ensure $\dot{V}\leq  0$ in (C1), we can set $\alpha\leq\frac{2\mu_g\beta}{L_f^2 + 2B_xL_y}$. For (C2), we have $\dot{V}=0$ if and only if $y=y^*(x)$ and ${\cal T}(x,y^*(x))=\nabla F(x)=0$. 
\end{proof}

 With the insights gained from the continuous-time update \eqref{eq.ode_stable}, 
our stochastic update \eqref{eq.STABLE} essentially discretizes time $t$ into iteration $k$, and replaces the first- and second-order derivatives in $\dot x$ and $\dot y$ by their recursive (variance-reduced) stochastic values in \eqref{eq.STABLE-H}.  

 
\textbf{Remark.}
	The key ingredient of our STABLE method is the design of the lower-level update on $y^k$, which leads to a more accurate stochastic estimate of $\nabla F(x^k)$. See a comparison of the $y$-update with other algorithms in Figure \ref{fig:intuition}.
	In the update \eqref{eq.STABLE}, we implement the SGD-like update for the upper-level variable $x^k$. With the lower-level $y^k$ update unchanged, it is easy to apply SGD-improvement techniques such as momentum and variance reduction, to accelerate the convergence of STABLE. This will help STABLE achieve state-of-the-art performance for stochastic bilevel optimization.

\section{Convergence Analysis}
\label{sec.ic-ana}
In this section, we establish the convergence rate of our single-timescale STABLE algorithm. We will highlight the key steps of the proof and leave the detailed analysis in Appendix.

\paragraph{Moreau Envelop.}
Different from problem \eqref{opt0-deter}, \eqref{opt0} tackles the constraint on a convex and closed set $\mathcal{X}$. To levarage the ODE analysis to the constraint case, for fixed $\rho>0$, we define the Moreau envelop and proximal map as follows.
\begin{align}
    &\Phi_{1 / \rho}(z):=\min _{x \in \mathcal{X}}\left\{F(x)+(\rho / 2)\|x-z\|^{2}\right\}\nonumber\\ 
    &\widehat{x}(z):=\underset{x \in  \mathcal{X}}{\arg\min}\left\{F(x)+(\rho / 2)\|x-z\|^{2}\right\}
\end{align}
For any $\epsilon>0$, we use the definition in \citep{davis2018stochastic} that $x^k\in\mathcal{X}$ is an $\epsilon$-nearly stationary solution if $x^k$ satisfies the following condition 
\begin{align}\label{near-stationary}
    \EE\left[\|\widehat{x}(x^k)-x^k\|^2\right]\leq \rho^2\epsilon.
\end{align}

In Section \ref{convergence_main}, we will utilize the near-stationarity condition \eqref{near-stationary} as a tool to quantify the convergence of STABLE when $F(x)$ is non-convex. 

\subsection{Main results}\label{convergence_main}
We first present the result of our algorithm when the upper-level function $F(x)$ is nonconvex in $x$. We need the following additional assumption.

\vspace{0.2cm}
\noindent\textbf{Assumption 4 (weak convexity).}
\emph{Function $F(x)$ is $\mu_F$-weakly convex in $x$, that is, $\nabla^2_{xx}F(x)\succeq \mu_F I$.} Note that $\mu_F$ is not necessarily positive. 

\vspace{0.4cm}

For simplicity of the convergence analysis, we define the following Lyapunov function 
\begin{align*}\label{eq.Lya}
    &\mathbb{V}^k \!:=\! \Phi_{1/\rho}(x^k) +  \|y^k - y^*(x^k)\|^2 \\
    &~~~~~~~+\|H_{yy}^k - \nabla_{yy}^2g(x^k, y^k)\|^2\\
    &~~~~~~~+  \|H_{xy}^k - \nabla_{xy}^2g(x^k, y^k)\|^2 \numberthis
\end{align*}
which mimics the continuous-time Lyapunov function \eqref{eq.cont-lyap} for the deterministic problem. Similar to the ODE analysis, we need to quantify the difference between two Lyapunov functions $\mathbb{V}^{k+1}-\mathbb{V}^k$. We will first analyze the descent of the Moreau Envelop of the upper-level objective in the next lemma. 
\begin{lemma}[Descent of the upper level]
\label{lemma3}
Under Assumptions 1--4, the sequence of $x^k$ satisfies 
\begin{align*}\label{eq.lemma3}
&\quad\EE[\Phi_{1/\rho}(x^{k+1})]-  \EE[\Phi_{1/\rho}(x^k)]\\ 
&\leq -\frac{(\mu_F+\rho)\rho\alpha_k}{4}\|\widehat{x}(x^k)-x^k\|^2+2 \rho\alpha_k^2\left(C_{f_x}^2 + \frac{C_{g_{xy}^2}C_{f_y}^2}{\mu_g^2}\right)\\
&~~~ +\frac{2L_f^2\rho \alpha_k}{\mu_F+\rho}\|y^k-y^*(x^k)\|^2\\
&~~~ +\frac{4C_{f_y}^2C_{g_{xy}}^2\rho \alpha_k}{(\mu_F+\rho)\mu_g^4} \EE[\|H_{yy}^k - \nabla_{yy}^2(x^k,y^k)\|^2]\\
&~~~+ \frac{4C_{f_y}^2\rho \alpha_k}{(\mu_F+\rho)\mu_g^2}  \EE[\|H_{xy}^k-\nabla_{xy}^2(x^k,y^k)\|^2]\numberthis
\end{align*}
where $L_f, L_F$ are defined in Lemma \ref{lemma_lip} of Appendix, and $C_{g_{xy}}$ is the projection radius in \eqref{eq.STABLE-H1}. 
\end{lemma}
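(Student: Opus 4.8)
The plan is to read Lemma \ref{lemma3} as a Moreau-envelope descent estimate for a biased, doubly-stochastic projected gradient step, in the spirit of \citep{davis2018stochastic}. First I would set $\widehat{x}^k := \widehat{x}(x^k)$ and write the upper-level update as $x^{k+1}=\mathcal{P}_\mathcal{X}(x^k-\alpha_k d^k)$ with the stochastic bilevel gradient $d^k := \nabla_x f(x^k,y^k;\xi^k)-H_{xy}^k(H_{yy}^k)^{-1}\nabla_y f(x^k,y^k;\xi^k)$. Conditioning on $\mathcal{F}^k$ \emph{together with} $\phi^k$, so that $H_{xy}^k,H_{yy}^k$ are frozen and only $\xi^k$ is averaged, unbiasedness of the first-order oracles (Assumption 3) gives $\EE[d^k\mid\mathcal{F}^k,\phi^k]=\bar d^k := \nabla_x f(x^k,y^k)-H_{xy}^k(H_{yy}^k)^{-1}\nabla_y f(x^k,y^k)$, which is the quantity I will compare against the true gradient $\nabla F(x^k)=\overline{\nabla}_x f(x^k,y^*(x^k))$.

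Next I would carry out the standard two-line Moreau bound: since $\widehat{x}^k$ is feasible for the prox subproblem defining $\Phi_{1/\rho}(x^{k+1})$, we get $\Phi_{1/\rho}(x^{k+1})-\Phi_{1/\rho}(x^k)\le \frac{\rho}{2}(\|x^{k+1}-\widehat{x}^k\|^2-\|x^k-\widehat{x}^k\|^2)$. Then the variational inequality for the projection, evaluated at $u=\widehat{x}^k$, yields $\|x^{k+1}-\widehat{x}^k\|^2\le \|x^k-\widehat{x}^k\|^2-\|x^{k+1}-x^k\|^2+2\alpha_k\langle d^k,\widehat{x}^k-x^{k+1}\rangle$. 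Splitting $\widehat{x}^k-x^{k+1}=(\widehat{x}^k-x^k)+(x^k-x^{k+1})$ and absorbing $\rho\alpha_k\langle d^k,x^k-x^{k+1}\rangle$ into $-\frac{\rho}{2}\|x^{k+1}-x^k\|^2$ by Young's inequality leaves the clean estimate $\Phi_{1/\rho}(x^{k+1})-\Phi_{1/\rho}(x^k)\le \rho\alpha_k\langle d^k,\widehat{x}^k-x^k\rangle+\frac{\rho\alpha_k^2}{2}\|d^k\|^2$.

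The heart of the argument is to take $\EE[\cdot\mid\mathcal{F}^k,\phi^k]$ and estimate $\langle\bar d^k,\widehat{x}^k-x^k\rangle=\langle\nabla F(x^k),\widehat{x}^k-x^k\rangle+\langle\bar d^k-\nabla F(x^k),\widehat{x}^k-x^k\rangle$. For the exact-gradient part, combining $\mu_F$-weak convexity of $F$ (Assumption 4) with the $(\mu_F+\rho)$-strong convexity of the prox objective, whose minimizer over $\mathcal{X}$ is $\widehat{x}^k$ and for which $x^k\in\mathcal{X}$, gives $\langle\nabla F(x^k),\widehat{x}^k-x^k\rangle\le -(\mu_F+\rho)\|\widehat{x}^k-x^k\|^2$. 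For the error part I would apply Cauchy--Schwarz and Young's inequality with constant tuned so that $\tfrac34(\mu_F+\rho)\|\widehat{x}^k-x^k\|^2$ is re-absorbed, keeping the net coefficient $-\tfrac14(\mu_F+\rho)$ and producing a multiple of $\frac{1}{\mu_F+\rho}\|\bar d^k-\nabla F(x^k)\|^2$. It then remains to bound this error: I split $\bar d^k-\nabla F(x^k)$ into (i) the lower-level inexactness $\overline{\nabla}_x f(x^k,y^k)-\overline{\nabla}_x f(x^k,y^*(x^k))$, controlled by $L_f\|y^k-y^*(x^k)\|$ via Lemma \ref{lemma_lip}, and (ii) the Hessian-surrogate error $[H_{xy}^k(H_{yy}^k)^{-1}-\nabla_{xy}^2 g(\nabla_{yy}^2 g)^{-1}]\nabla_y f$. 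For (ii) I use the product-difference identity $H_{xy}^k(H_{yy}^k)^{-1}-\nabla_{xy}^2 g(\nabla_{yy}^2 g)^{-1}=(H_{xy}^k-\nabla_{xy}^2 g)(H_{yy}^k)^{-1}+\nabla_{xy}^2 g\,(H_{yy}^k)^{-1}(\nabla_{yy}^2 g-H_{yy}^k)(\nabla_{yy}^2 g)^{-1}$, together with $\|(H_{yy}^k)^{-1}\|,\|(\nabla_{yy}^2 g)^{-1}\|\le 1/\mu_g$ (from the truncation $\underline{\mathcal{P}}$ and Assumption 2), $\|\nabla_{xy}^2 g\|\le C_{g_{xy}}$, and $\|\nabla_y f\|\le C_{f_y}$, to obtain $\|\bar d^k-\nabla F(x^k)\|^2\lesssim L_f^2\|y^k-y^*(x^k)\|^2+\frac{C_{f_y}^2}{\mu_g^2}\|H_{xy}^k-\nabla_{xy}^2 g\|^2+\frac{C_{f_y}^2 C_{g_{xy}}^2}{\mu_g^4}\|H_{yy}^k-\nabla_{yy}^2 g\|^2$.

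Finally I would bound the second-order term by $\EE[\|d^k\|^2\mid\mathcal{F}^k]\le 2C_{f_x}^2+2\frac{C_{g_{xy}}^2 C_{f_y}^2}{\mu_g^2}$ using the moment bounds in Assumption 3 with $\|H_{xy}^k\|\le C_{g_{xy}}$ and $\|(H_{yy}^k)^{-1}\|\le 1/\mu_g$, then take total expectation and collect terms, reproducing \eqref{eq.lemma3}; the stated constants are loose upper bounds of the sharper ones the above yields. The main obstacle I anticipate is the careful conditioning: because $H_{xy}^k,H_{yy}^k$ depend on $\phi^k\notin\mathcal{F}^k$, one must average $\xi^k$ first to reach $\bar d^k$ and only afterwards take expectation over $\phi^k$, which is exactly why the Hessian-error terms appear under $\EE$ while the $\mathcal{F}^k$-measurable quantities $\|\widehat{x}^k-x^k\|^2$ and $\|y^k-y^*(x^k)\|^2$ do not. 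The second delicate point is keeping the descent coefficient positive, i.e.\ tuning the Young constant so the $\|\widehat{x}^k-x^k\|^2$ term survives with coefficient $\tfrac14(\mu_F+\rho)$ rather than being swallowed by the error term.
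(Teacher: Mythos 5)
Your proposal is correct and follows essentially the same route as the paper's proof: the same Moreau-envelope descent obtained from the feasibility of $\widehat{x}^k$ in the prox subproblem plus the projection variational inequality, the same weak-convexity bound on $\langle\nabla F(x^k),\widehat{x}^k-x^k\rangle$, the same decomposition of the bias into lower-level inexactness (via $L_f$) and Hessian-surrogate error (via the product-difference identity with $\|H_{xy}^k\|\leq C_{g_{xy}}$, $H_{yy}^k\succeq\mu_g I$), and the same second-moment bound on the stochastic direction. Your more careful conditioning on $\phi^k$ and your factor-of-two sharper constants (e.g., $-(\mu_F+\rho)\|\widehat{x}^k-x^k\|^2$ rather than $-\tfrac{\mu_F+\rho}{2}\|\widehat{x}^k-x^k\|^2$ in the exact-gradient step, which is indeed valid by using strong convexity of the prox objective at both $x^k$ and $\widehat{x}^k$) are refinements within the same argument and still yield the lemma, whose stated constants are loose upper bounds.
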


Lemma \ref{lemma3} implies that the descent of the Moreau Envelop of the upper-level objective functions depends on the error of the lower-level variable $y^k$, and the estimation errors of $H_{yy}^k$ and $H_{xy}^k$. After bounding all of them in Lemmas~\ref{lemma2} and \ref{scsc-lemma2} in Appendix, we can get the following convergence result. 

\begin{customthm}{2}[Nonconvex]\label{theorem1}
Under Assumptions 1--4 and setting $\rho>|\mu_F|$, if we choose the stepsizes as 
\small
\begin{subequations}\label{eq.stepsize}
\begin{align}
\!\!&\beta_k\leq \min\Bigg\{\frac{1}{\sqrt{K}}, \frac{\mu_g /L_g}{32(\mu_g+L_g) c}\Bigg\}\label{eq.stepsize_1}\\
\!\!&\alpha_k\leq\min\Bigg\{\beta_k,\frac{(c+4\rho C_{g_{xy}}^2C_{f_y}^2/(\mu_F+\rho)\mu_g^4)^{-1}}{\sqrt{K}}, \nonumber\\ 
\!\!&\frac{(c+4\rho C_{f_y}^2/(\mu_F+\rho)\mu_g^2)^{-1}}{\sqrt{K}},	~\frac{\mu_g L_g\beta_k/(\mu_g+L_g)}{2(c+2\rho L_f^2/(\mu_F+\rho))} \Bigg\}\label{eq.stepsize_2}
\end{align}
\normalsize
\end{subequations}
\normalsize
and $\tau_k=\frac{1}{\sqrt{K}}$, then the iterates $\{x^k\}$ and $\{y^k\}$ satisfy
	\begin{align}\label{eq.theorem1}
    &\frac{1}{K}\sum_{k=1}^K\EE\left[\left\|\widehat{x}(x^k)-x^k\right\|^2\right] = {\cal O}\left(\frac{1}{\sqrt{K}}\right)~~{\rm and}\nonumber\\
    &\EE\left[\left\|y^K-y^*(x^K)\right\|^2\right] = {\cal O}\left(\frac{1}{\sqrt{K}}\right)
\end{align}
where $y^*(x^K)$ is the minimizer of the problem  \eqref{opt0-low}, and $c>0$ is a constant that is independent of the stepsizes $\alpha_k, \beta_k, \tau_k$ and the number of iterations $K$.
\end{customthm}

Theorem \ref{theorem1} implies that the convergence rate of STABLE to the stationary point of \eqref{opt0} is ${\cal O}(K^{-\frac{1}{2}})$.
Since each iteration of STABLE only uses two samples (see Algorithm \ref{alg: STABLE}), the sample complexity to achieve an $\epsilon$-stationary point of \eqref{opt0} is ${\cal O}(\epsilon^{-2})$, which is on the same order of SGD's sample complexity for the single-level nonconvex problems \citep{ghadimi2013sgd}, and significantly improves the state-of-the-art single-loop TTSA's convergence rate ${\cal O}(\epsilon^{-2.5})$ \citep{hong2020ac}. 
In addition, this convergence rate is not directly comparable to other recently developed bilevel optimization methods, e.g., \citep{ghadimi2018bilevel,ji2020provably} since STABLE does not need the increasing batchsize nor double-loop. Regarding the sample complexity, however, STABLE improves over \citep{ghadimi2018bilevel,ji2020provably} by at least the order of ${\cal O}(\log(\epsilon^{-1}))$.

\begin{figure*}[t]
        \vspace*{-0.2cm}
  
  \includegraphics[width=0.33\textwidth]{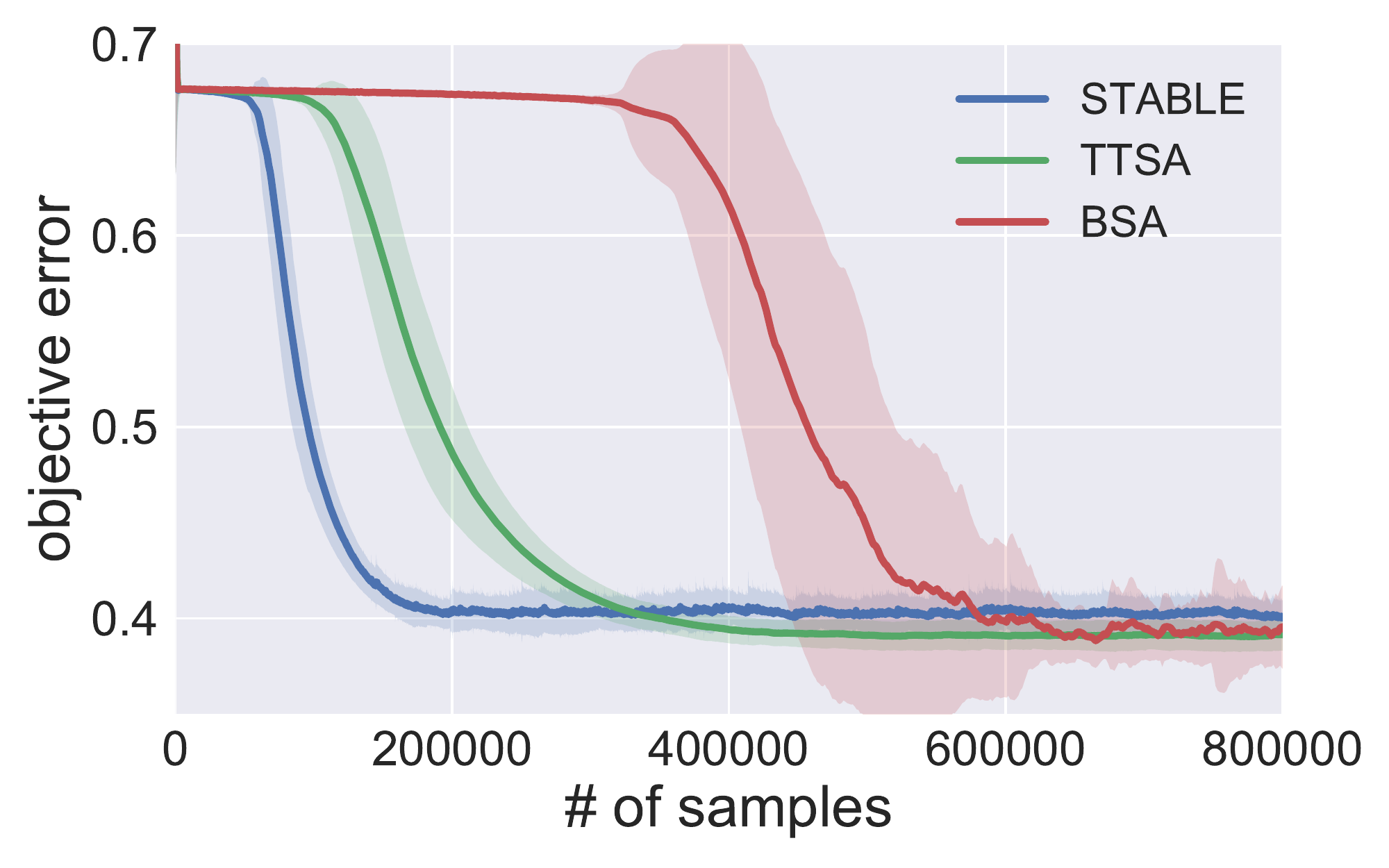}\hspace{-0.1cm}\includegraphics[width=0.33\textwidth]{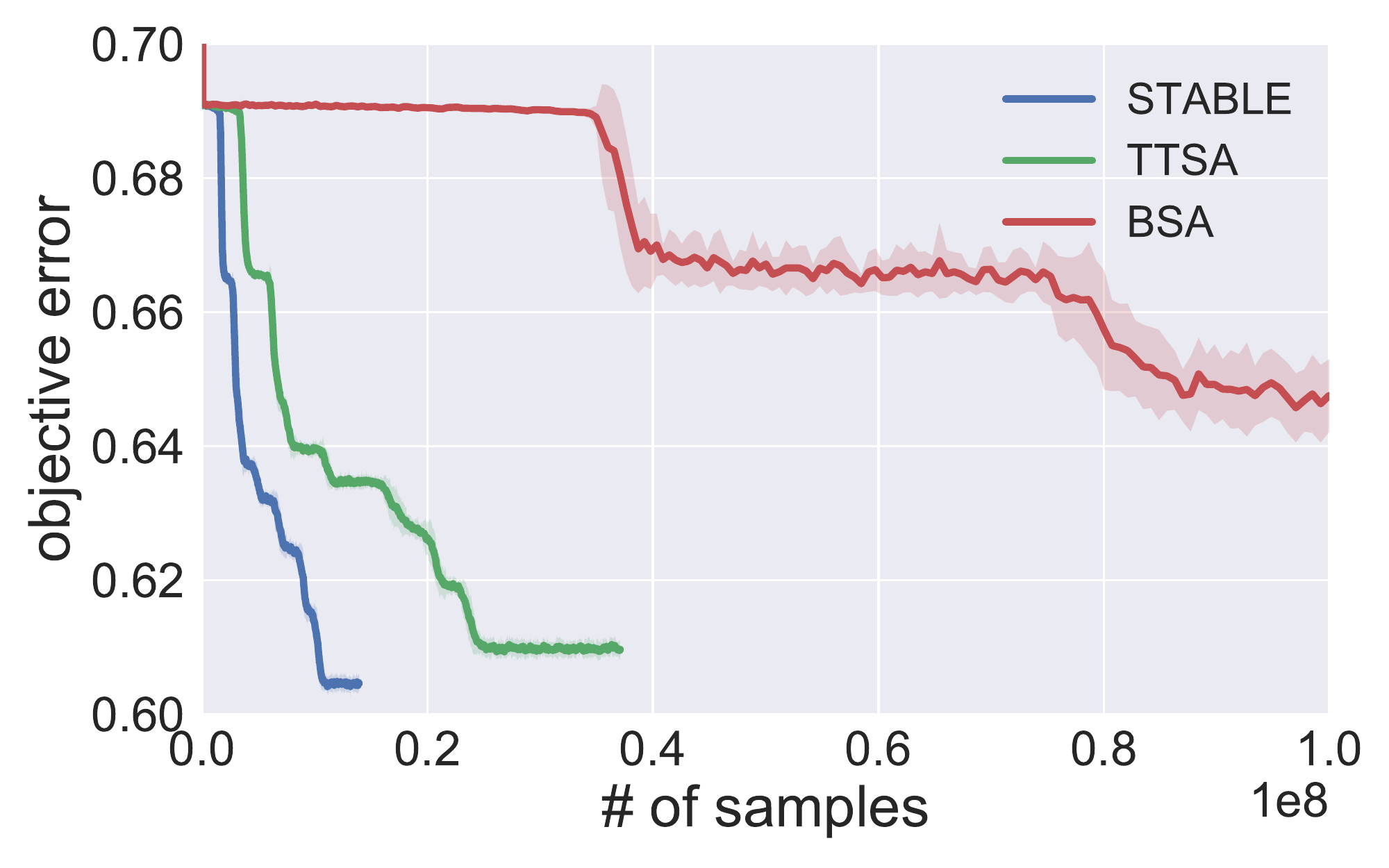}\includegraphics[width=0.33\textwidth]{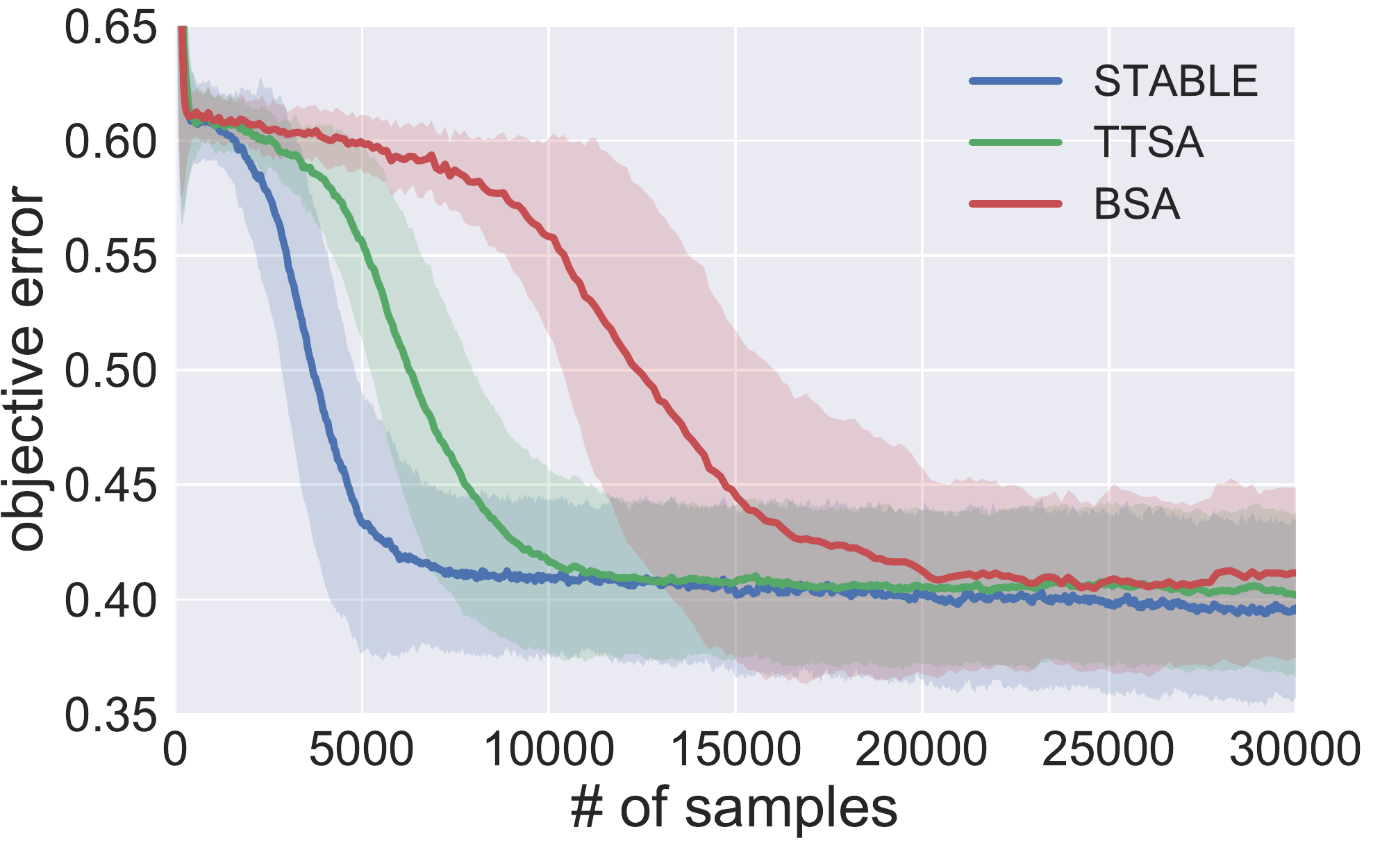}
  \vspace{0.1cm}
  
  \includegraphics[width=0.33\textwidth]{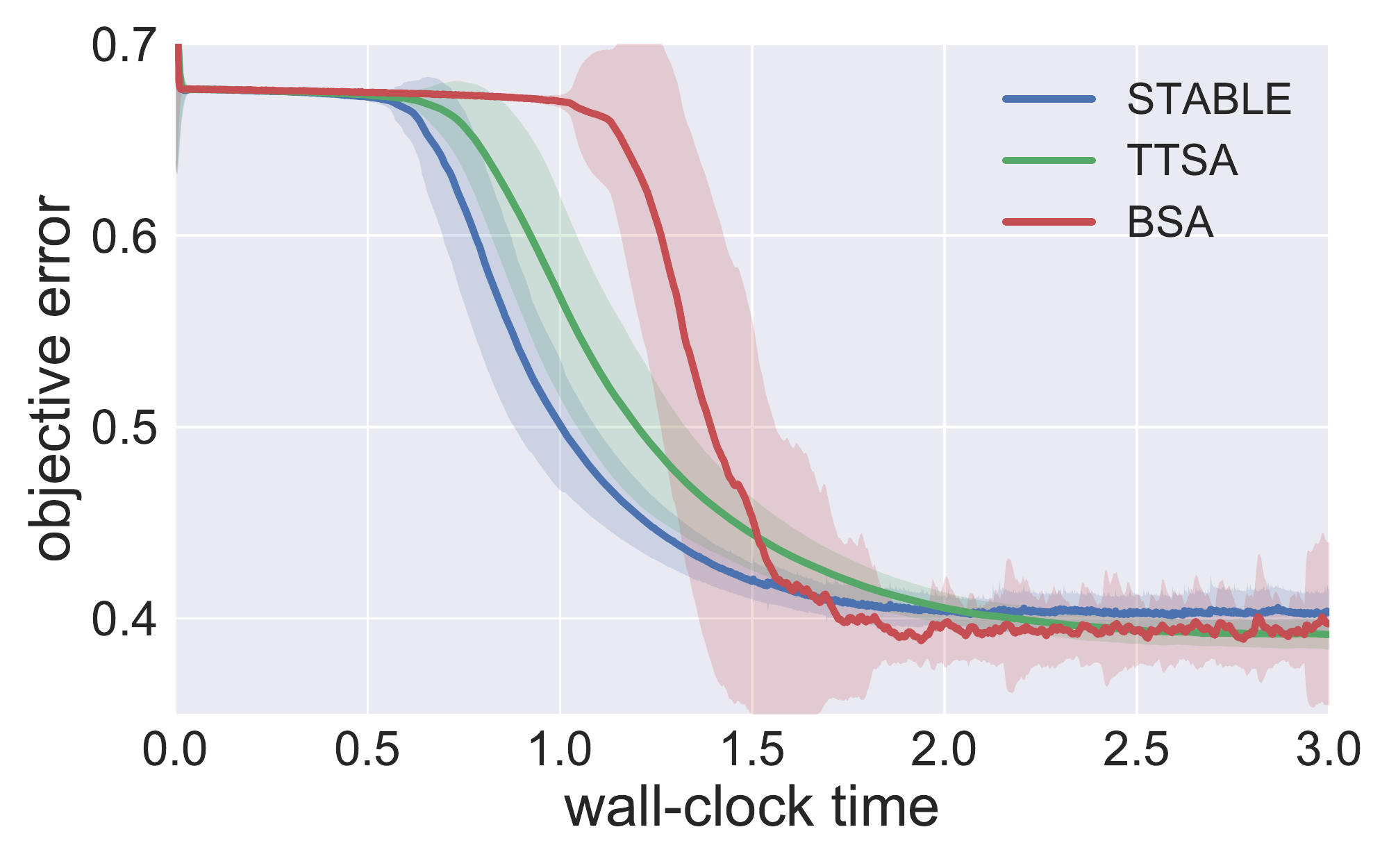}\hspace{-0.1cm}\includegraphics[width=0.33\textwidth]{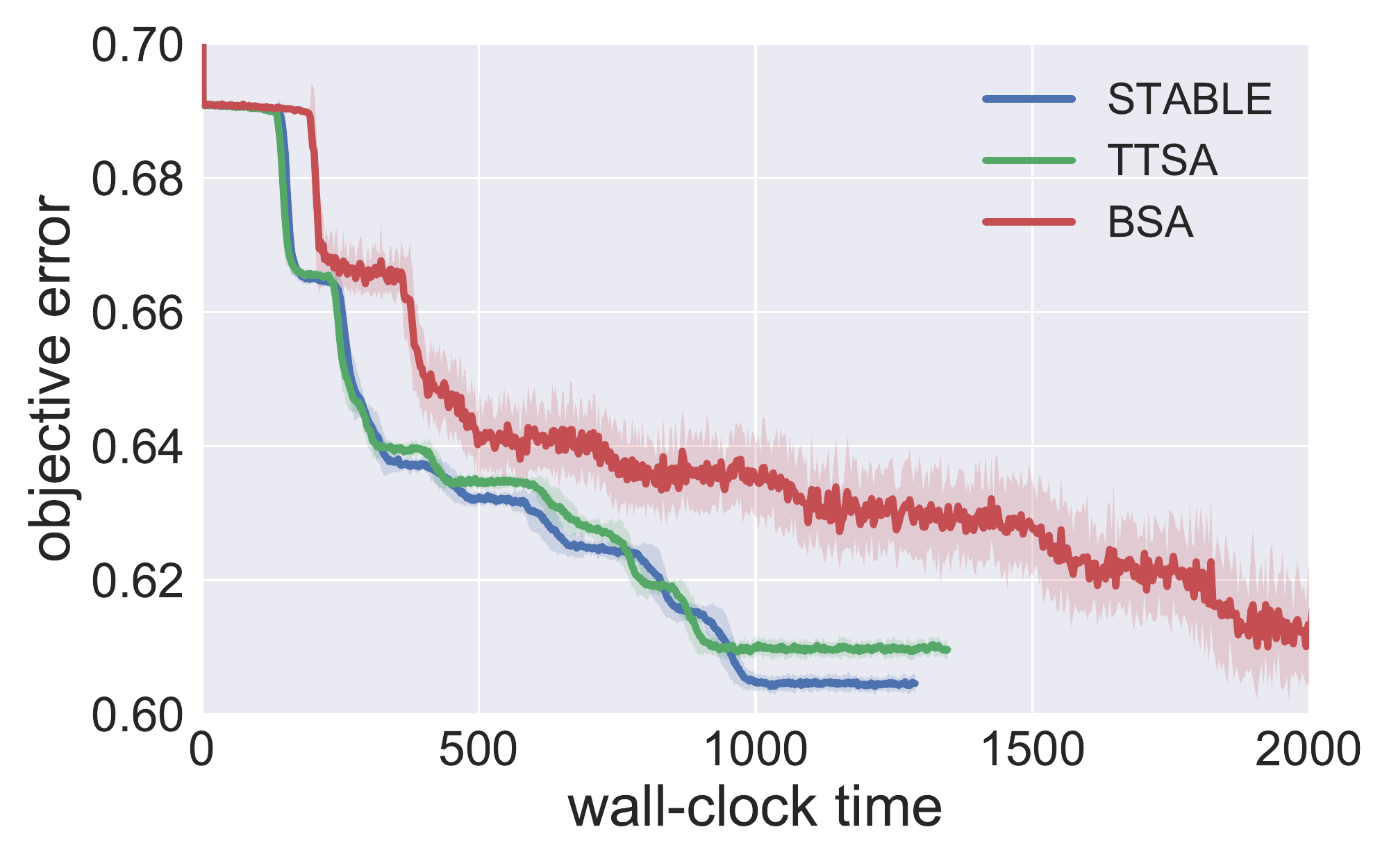}\includegraphics[width=0.33\textwidth]{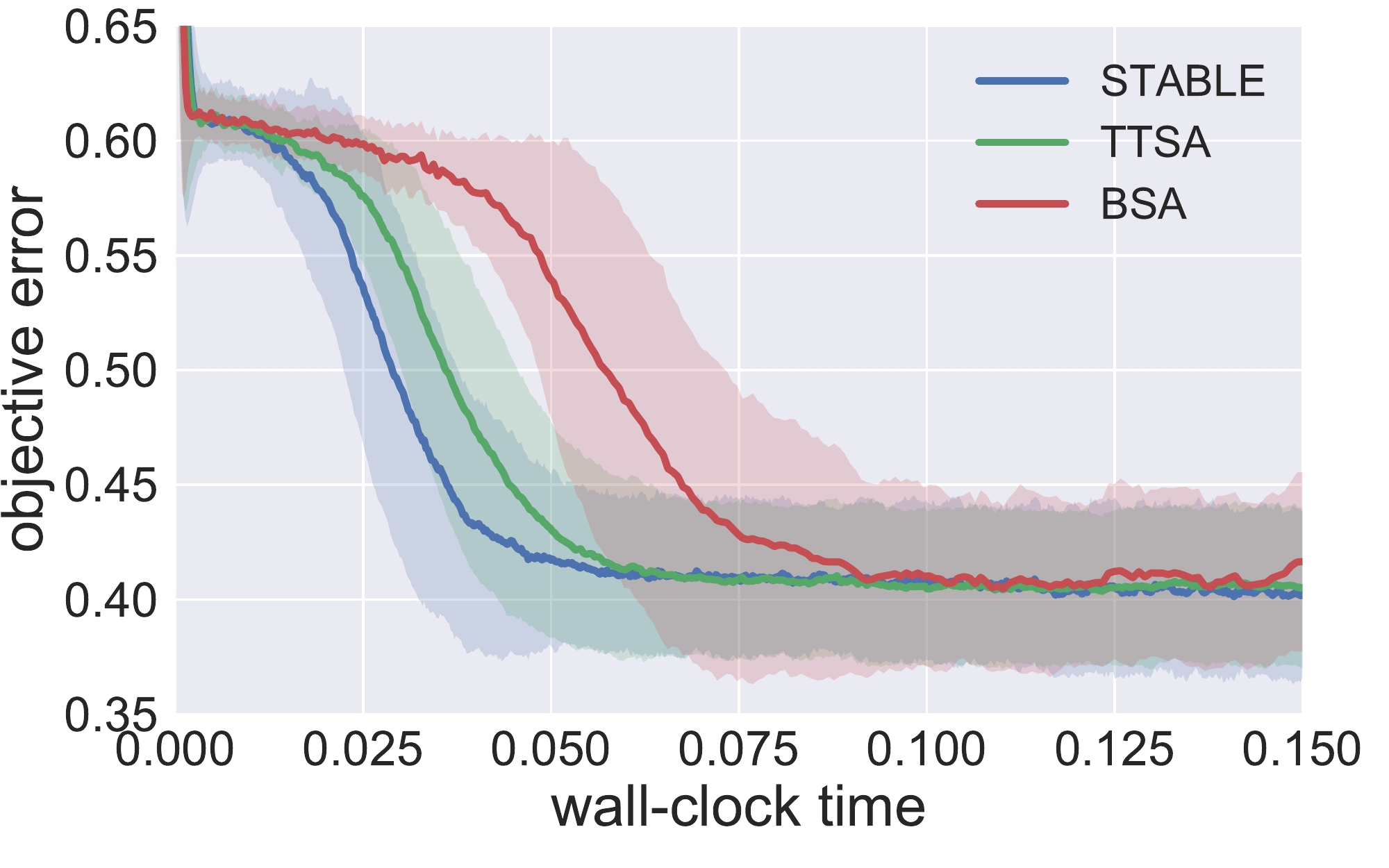}
 \vspace{-0.3cm}
    \caption{The hyper-parameter optimization task on ijcnn1, covtype and australian datasets. The solid line shows the results averaged over $50$ independent trials with random initialization, and the shaded region denotes the standard deviation of results over random trials.}
    \label{fig:hyperpara}
\end{figure*}

We next present the result in the strongly convex case for completeness, where the following additional assumption is needed.
\vspace{0.1cm}

\noindent\textbf{Assumption 5 (strong convexity).}
\emph{Function $F(x)$ is $\mu$-strongly convex in $x$, that is, $\nabla^2_{xx}F(x)\succeq \mu I$.}

Notice that Assumption 5 does not contradict with Assumption 3 since for the constrained upper-level problem \eqref{opt0}, only in the constraint set $\mathcal{X}$ do the gradients need to be bounded. Regarding applications, hyperparameter optimization for linear regression or SVM satisfies this assumption. 

 \begin{customthm}{3}[Strongly convex]\label{theorem2}
Under Assumptions 1--3, 5, if we choose the stepsizes as 
\begin{subequations}\label{eq.stepsize2}
\begin{align}
	&\beta_k=\tau_k\leq \min\left\{\frac{\mu_g/L_g}{32(\mu_g+L_g)}, \frac{1}{K_0+k}\right\}\\
	&\alpha_k\leq\min\left\{\sqrt{\frac{\mu_gL_g}{4c(\mu_g+L_g)}}, \frac{\mu\mu_gL_g}{2L_f^2(\mu_g+L_g)},\right.\nonumber\\ &\left.~~~~~~~~~~~~~~~\frac{1}{\sqrt{4c}},\frac{\mu\mu_g^4}{8C_{g_{xy}}^2C_{f_y}^2}, \frac{\mu\mu_g^2}{8C_{f_y}^2}\right\}\beta_k
\end{align}
\end{subequations}
where $K_0>0$ is a sufficiently large constant and $c>0$ is an absolute constant that is independent of $\alpha_k, \beta_k, \tau_k$, then the iterates $\{x^k\}$ and $\{y^k\}$ satisfy
	\begin{align}\label{eq.theorem2}
    &\EE\left[\left\|x^k-x^*\right\|^2\right] = {\cal O}\left(\frac{1}{k}\right)~~~{\rm and}~~~\nonumber\\
    &\EE\left[\left\|y^k-y^*(x^k)\right\|^2\right] = {\cal O}\left(\frac{1}{k}\right)
\end{align}
where  the solution $x^*$ is defined as $x^*=\argmin_{x \in \cal X} F(x)$ and $y^*(x^k)$ is the minimizer of the lower-level problem in \eqref{opt0-low}.
\end{customthm}
Theorem \ref{theorem2} implies that to achieve an $\epsilon$-optimal solution for both the lower-level and upper-level problems, the sample complexity of STABLE is ${\cal O}(\epsilon^{-1})$. This complexity is on the same order of SGD's complexity for the single-level strongly convex problems \citep{ghadimi2013sgd}, and improves the state-of-the-art single-loop TTSA's sample complexity ${\cal O}(\epsilon^{-2})$ for an $\epsilon$-optimal upper-level solution and ${\cal O}(\epsilon^{-1.5})$ for an $\epsilon$-optimal lower-level solution \citep{hong2020ac}. 
Compared with double-loop bilevel algorithms in this strong-convex case, STABLE also improves over the BSA's query complexity ${\cal O}( \epsilon^{-1} )$ in terms of the stochastic upper-level function and ${\cal O}(\epsilon^{-2})$  in terms of the stochastic lower-level function \citep{ghadimi2018bilevel}.

\section{Numerical Tests}\label{num}

This section evaluates the empirical performance of our STABLE. 
For all compared algorithms, we follow the order of stepsizes suggested in the original papers, and the stepsizes are chosen from $\{1,0.5,0.1,0.05,0.01\}$, e.g., the best one for each algorithm. In our numerical experiments, we compared our method with several state-of-the-art algorithms such as BSA in \citep{ghadimi2018bilevel}, and TTSA in \citep{hong2020ac}. We did not include other recent algorithms such as \citep{ji2020provably,guo2021randomized,khanduri2021momentum} which either require increasing the batch size or adding the acceleration of $x$-update. All the algorithms are implemented using Python 3.6 and run on the same laptop.  
 
 We test all the algorithms in a hyper-parameter optimization task which aims to find the optimal hyper-parameter $x\in \mathbb{R}^d$ (e.g., regularization coefficient), which is used in training a model $y\in\mathbb{R}^d$ on the training set, such that the learned model achieves the low risk on the validation set. 
Let $\ell(y;\xi)$ denote the logistic loss of the model $y$ on datum $\xi$, and ${\cal D_{\rm{val}}}$ and ${\cal D_{\rm{tra}}}$ denote, respectively, the training and validation datasets. 
Specifically, we aim to solve 
\begin{align}\label{eq.hyer-opt}
 &\min_{x\in \mathbb{R}^d}~\EE_{\xi\sim{\cal D_{\rm{val}}}}[\ell(y^*(x);\xi)]\nonumber\\
 &\text{s.t.} ~y^*(x)\in\argmin_{y\in\mathbb{R}^d}\EE_{\phi\sim{\cal D_{\rm{tra}}}}[\ell(y;\phi)] + \sum\limits_{i=1}^dx_iy_i^2. 
\end{align}
In Figure \ref{fig:hyperpara}, we compare the performance of three algorithms on ijcnn1, covtype and australian datasets \citep{CC01a} and report their objective errors versus number of samples and the walk-clock time. 
In all tested datasets, STABLE has sizeable gain in terms of sample complexity compared with the double-loop  or two-timescale algorithms since it uses single-loop and single-timescale update. In addition, although TTSA has more efficient $y$-update, STABLE enjoys the better overall wall-clock time in our simulated setting. This suggests that our STABLE algorithm is preferable in the regime where the sampling is more costly than computation or the dimension $d$ is relatively small, for example in hyperparameter optimization in quantitative trading. 

\section{Conclusions}
\label{sec.concl}
This paper develops a new stochastic gradient estimator for bilevel optimization problems.
When running SGD on top of this stochastic bilevel gradient, the resultant STABLE algorithm runs in a single loop fashion, and uses a single-timescale update. 
In both the nonconvex and strongly-convex cases, STABLE matches the sample complexity of SGD for single-level stochastic problems. 
One possible extension is to apply SGD-improvement techniques to accelerate STABLE, which helps STABLE achieve state-of-the-art performance for bilevel problems. 
Another natural extension is to apply our bilevel optimization method to the general two-timescale stochastic approximation case, in a similar fashion to \citep{dalal2018finite,kaledin2020finite}. Improving the sample complexity of such general case can be of great interest to the reinforcement learning community. 
 
 \section*{Acknowledgment}
The work of T. Chen and Q. Xiao was partially supported by and the Rensselaer-IBM AI Research Collaboration (\url{http://airc.rpi.edu}), part of the IBM AI Horizons Network (\url{http://ibm.biz/AIHorizons}) and NSF 2134168. 

\bibliography{myabrv,bilevelopt}
\bibliographystyle{plainnat}

\clearpage
\onecolumn
\aistatstitle{Supplementary Material for\\
``\FullTitle"}

\appendix
\section{Proof sketch}
 In this section, we highlight the key steps of the proof towards Theorem \ref{theorem1}. The proof for the strongly convex case in Theorem \ref{theorem2} will follow similar steps. 

For simplicity of the convergence analysis, we define the following Lyapunov function 
\begin{align*}\label{eq.Lya1}
    \mathbb{V}^k \!:=\! \Phi_{1/\rho}(x^k) &+  \|y^k - y^*(x^k)\|^2 + \|H_{yy}^k - \nabla_{yy}^2g(x^k, y^k)\|^2 +  \|H_{xy}^k - \nabla_{xy}^2g(x^k, y^k)\|^2 \numberthis
\end{align*}
 which mimics the continuous-time Lyapunov function \eqref{eq.cont-lyap} for the deterministic problem.

Similar to the ODE analysis, we first quantify the difference between two Lyapunov functions as
\begin{align*}\label{eq.diff-Lya}
  \mathbb{V}^{k+1}  -  \mathbb{V}^k   =  & \underbracket{ \Phi_{1/\rho}(x^{k+1}) -  \Phi_{1/\rho}(x^k)}_{\rm Lemma~\ref{lemma3}}  
 + \underbracket{ \|y^{k+1} - y^*(x^{k+1})\|^2 - \|y^k - y^*(x^k)\|^2 }_{\rm Lemma~\ref{lemma2}}\\
&  + \underbracket{ \|H_{yy}^{k+1} - \nabla_{yy}^2g(x^{k+1}, y^{k+1})\|^2  -  \|H_{yy}^k - \nabla_{yy}^2g(x^k, y^k)\|^2 }_{\rm Lemma~\ref{scsc-lemma2}}\\
& + \underbracket{ \|H_{xy}^{k+1} - \nabla_{xy}^2g(x^{k+1}, y^{k+1})\|^2 -  \|H_{xy}^k - \nabla_{xy}^2g(x^k, y^k)\|^2}_{\rm Lemma~\ref{scsc-lemma2}}.\numberthis
\end{align*}

The difference in \eqref{eq.diff-Lya} consists of four difference terms: the first term quantifies the descent of the Moreau Envelope of the upper-level objective functions; the second term characterizes the descent of the lower-level optimization errors; and, the third and fourth terms measure the estimation error of the second-order quantities. Since Lemma \ref{lemma3} stated in the main body bounded the Moreau Envelop of the upper level, we will bound the rest, respectively, in the ensuing lemmas. 

We will analyze the error of the lower-level variable, which is the key step to improving the existing results. 
\begin{lemma}[Error of lower level]
\label{lemma2}
Suppose that Assumptions 1--3 hold, and $y^{k+1}$ is generated by running iteration \eqref{eq.STABLE} given $x^k$. If we choose $\beta_k\leq\frac{2}{\mu_g+L_g}$, then $y^{k+1}$ satisfies 
\begin{align*}\label{eq.lemma2}
& \EE\left[\|y^*(x^{k+1})-y^{k+1}\|^2|{\cal F}^k\right] 
\leq \left(1-\frac{\mu_g L_g\beta^k}{\mu_g + L_g}+ \frac{c\alpha_k^2}{\beta_k}\right)\|y^k-y^*(x^k)\|^2   +  \left(1+\frac{\mu_g L_g\beta^k}{\mu_g + L_g}\right)\beta_k^2\sigma_{g_y}^2\\
&\quad   + \frac{c\alpha_k^4}{\beta_k} + \EE\left[\|H_{yy}^k - \nabla_{yy}^2g(x^k,y^k)\|^2|{\cal F}^k\right]\frac{c\alpha_k^2}{\beta_k}   + \EE\left[\|H_{xy}^k - \nabla_{xy}^2g(x^k,y^k)\|^2|{\cal F}^k\right]\frac{c\alpha_k^2}{\beta_k}.\!	  \numberthis
\end{align*}
\end{lemma}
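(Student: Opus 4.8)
The plan is to track the one-step evolution of the lower-level error by isolating the two effects that the update \eqref{eq.STABLE3} is designed to combine: a stochastic gradient step that contracts toward $y^*(x^k)$, and the Newton-type correction $-(H_{yy}^k)^{-1}(H_{xy}^k)^\top(x^{k+1}-x^k)$ that predicts the drift of the minimizer as $x$ moves from $x^k$ to $x^{k+1}$. Concretely, I would add and subtract $y^*(x^k)$ and write $y^{k+1}-y^*(x^{k+1}) = u + v$, where $u := y^k - \beta_k h_g^k - y^*(x^k)$ is the pure SGD residual for the lower-level problem at the frozen $x^k$, and $v := y^*(x^k) - y^*(x^{k+1}) - (H_{yy}^k)^{-1}(H_{xy}^k)^\top(x^{k+1}-x^k)$ is the prediction error of the correction term. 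The first step is a \emph{pathwise} Young split $\|u+v\|^2 \le (1+\lambda)\|u\|^2 + (1+\lambda^{-1})\|v\|^2$ with the weight chosen as $\lambda = \mu_g L_g \beta_k/(\mu_g+L_g)$; performing the split before taking expectations sidesteps the troublesome cross term $\langle u,v\rangle$, in which both factors depend on the fresh samples $\phi^k,\xi^k$.

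For $\EE[\|u\|^2\mid{\cal F}^k]$ I would run the textbook strongly-convex/smooth SGD bound: using $\nabla_y g(x^k,y^*(x^k))=0$, the co-coercivity inequality under Assumptions 1--2 gives $\langle \nabla_y g(x^k,y^k), y^k-y^*(x^k)\rangle \ge \frac{\mu_g L_g}{\mu_g+L_g}\|y^k-y^*(x^k)\|^2 + \frac{1}{\mu_g+L_g}\|\nabla_y g(x^k,y^k)\|^2$, so that for $\beta_k\le 2/(\mu_g+L_g)$ the step is a contraction, and the unbiasedness and variance bound $\sigma_{g_y}^2$ in Assumption 3 produce
\[
\EE[\|u\|^2\mid{\cal F}^k]\le \Big(1-\tfrac{2\mu_g L_g\beta_k}{\mu_g+L_g}\Big)\|y^k-y^*(x^k)\|^2 + \beta_k^2\sigma_{g_y}^2.
\]
Multiplying by $(1+\lambda)$ and using $(1+\lambda)(1-2\lambda)\le 1-\lambda$ collapses the contraction factor to exactly $1-\frac{\mu_g L_g\beta_k}{\mu_g+L_g}$ and yields the $(1+\frac{\mu_g L_g\beta_k}{\mu_g+L_g})\beta_k^2\sigma_{g_y}^2$ variance term in the claim.

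The substance is in $\|v\|^2$. I would decompose $v$ by the triangle inequality into three pieces, each carrying a factor $\|x^{k+1}-x^k\|$: (i) the second-order Taylor remainder $\|y^*(x^{k+1})-y^*(x^k)-\nabla_x y^*(x^k)(x^{k+1}-x^k)\|=O(\|x^{k+1}-x^k\|^2)$, using that the Jacobian $\nabla_x y^*$ is Lipschitz (a consequence of Assumptions 1--2); (ii) the error from evaluating the sensitivity at $y^k$ instead of $y^*(x^k)$, bounded by $L\|y^k-y^*(x^k)\|\,\|x^{k+1}-x^k\|$ via Lipschitzness of the solution-sensitivity map $y\mapsto -[\nabla_{yy}^2 g(x^k,y)]^{-1}\nabla_{yx}^2 g(x^k,y)$ from \eqref{grad-deter-2-1}; and (iii) the Hessian-estimation error, handled by the perturbation identity $A^{-1}B-C^{-1}D = A^{-1}(B-D)+A^{-1}(C-A)C^{-1}D$ with $A=\nabla_{yy}^2 g(x^k,y^k)$, $C=H_{yy}^k$, $B=\nabla_{yx}^2 g(x^k,y^k)$, $D=(H_{xy}^k)^\top$, where the projections $\overline{\cal P},\underline{\cal P}$ guarantee $\|(H_{yy}^k)^{-1}\|\le 1/\mu_g$ and $\|H_{xy}^k\|\le C_{g_{xy}}$, so this piece is $O\big((\|H_{yy}^k-\nabla_{yy}^2 g(x^k,y^k)\|+\|H_{xy}^k-\nabla_{xy}^2 g(x^k,y^k)\|)\,\|x^{k+1}-x^k\|\big)$. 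Squaring, multiplying by $(1+\lambda^{-1})=O(\beta_k^{-1})$, and using $\EE[\|x^{k+1}-x^k\|^2\mid{\cal F}^k]=O(\alpha_k^2)$ (from $\|x^{k+1}-x^k\|\le\alpha_k\|\cdot\|$, nonexpansiveness of ${\cal P}_{\cal X}$, and the bounded second moments $C_{f_x},C_{f_y}$) produces precisely the $\frac{c\alpha_k^4}{\beta_k}$, the $\frac{c\alpha_k^2}{\beta_k}\|y^k-y^*(x^k)\|^2$, and the $\frac{c\alpha_k^2}{\beta_k}\EE[\|H_{yy}^k-\nabla_{yy}^2 g(x^k,y^k)\|^2\mid{\cal F}^k]$ term together with its $H_{xy}^k$ analogue. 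I expect the main obstacle to be the coupling in piece (iii): both $\|x^{k+1}-x^k\|$ and $\|H_{yy}^k-\nabla_{yy}^2 g(x^k,y^k)\|$ depend on the iterate's fresh randomness, so their product cannot be factored through ${\cal F}^k$ directly. I would resolve this by Cauchy--Schwarz on the conditional expectation, invoking the \emph{fourth}-moment bounds ($p=4$) in Assumption 3 to control $\EE[\|x^{k+1}-x^k\|^4\mid{\cal F}^k]$ together with the pathwise boundedness supplied by $\overline{\cal P}$ and $\underline{\cal P}$; absorbing the resulting constants into $c$ then gives the stated inequality.
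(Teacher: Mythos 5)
Your decomposition $y^{k+1}-y^*(x^{k+1})=u+v$, the pathwise Young split with weight $\lambda=\mu_g L_g\beta_k/(\mu_g+L_g)$, the co-coercivity bound for $u$, and the three-piece bound on $v$ (Taylor remainder via Lipschitzness of $\nabla_x y^*$, sensitivity error from evaluating at $y^k$ instead of $y^*(x^k)$, and the inverse-perturbation identity with $\|(H_{yy}^k)^{-1}\|\le 1/\mu_g$, $\|H_{xy}^k\|\le C_{g_{xy}}$) are exactly the paper's proof. The genuine gap is your final step, the resolution of the coupling between $\|x^{k+1}-x^k\|$ and the $H$-errors, and it fails for three reasons. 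First, Cauchy--Schwarz turns the product into $\sqrt{\EE[\|H_{yy}^k-\nabla_{yy}^2 g(x^k,y^k)\|^4\mid{\cal F}^k]}\cdot\sqrt{\EE[\|x^{k+1}-x^k\|^4\mid{\cal F}^k]}$, and the first factor is not controlled: Assumption 3 supplies $p=4$ moments only for the stochastic gradients $\nabla_x f,\nabla_y f$ (which is what bounds the second factor), while for the stochastic Hessians of $g$ it gives only second moments. Second, the pathwise boundedness you invoke as a substitute fails for the $yy$ block: the projection $\underline{\cal P}$ in \eqref{eq.STABLE-H3} maps onto $\{X:X\succeq \mu_g I\}$, which enforces only a \emph{lower} eigenvalue bound, so $\|H_{yy}^k\|$, and hence $\|H_{yy}^k-\nabla_{yy}^2 g(x^k,y^k)\|$, has no almost-sure upper bound (only $\|H_{xy}^k\|\le C_{g_{xy}}$ is pathwise). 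Third, even granting such bounds, Cauchy--Schwarz delivers $\sqrt{\EE[\|\cdot\|^2\mid{\cal F}^k]}$ rather than $\EE[\|\cdot\|^2\mid{\cal F}^k]$; converting via Young's inequality leaves an extra additive ${\cal O}(\alpha_k^2/\beta_k)$ term, and under the stepsizes \eqref{eq.stepsize} of Theorem \ref{theorem1} (where $\alpha_k\sim\beta_k\sim K^{-1/2}$) such a term sums to ${\cal O}(\sqrt{K})$ over $K$ iterations, degrading the final rate from ${\cal O}(K^{-1/2})$ to ${\cal O}(1)$. So the inequality \eqref{eq.lemma2} in its stated form, with conditional \emph{second} moments of the $H$-errors (the form that Lemma \ref{scsc-lemma2} can telescope against), is out of reach along your route.

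The fix the paper uses is measure-theoretic rather than an inequality: condition on $H_{xy}^k$ and $H_{yy}^k$ in addition to ${\cal F}^k$. Given $({\cal F}^k,H_{xy}^k,H_{yy}^k)$, the increment $x^{k+1}-x^k$ depends only on the fresh sample $\xi^k$, which is independent of $\phi^k$, and thanks to the projections the bound
\begin{equation*}
\EE\left[\|x^{k+1}-x^k\|^2\mid{\cal F}^k,H_{xy}^k,H_{yy}^k\right]\le 2\left(C_{f_x}^2+\left(\frac{C_{g_{xy}}}{\mu_g}\right)^2 C_{f_y}^2\right)\alpha_k^2
\end{equation*}
is \emph{uniform} in the realized values of $H_{xy}^k,H_{yy}^k$; this is \eqref{eq.lemma2-pf-9-2}. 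The tower property then factors the troublesome product exactly,
\begin{equation*}
\EE\left[\|H_{yy}^k-\nabla_{yy}^2 g(x^k,y^k)\|^2\,\|x^{k+1}-x^k\|^2\mid{\cal F}^k\right]\le {\cal O}(\alpha_k^2)\,\EE\left[\|H_{yy}^k-\nabla_{yy}^2 g(x^k,y^k)\|^2\mid{\cal F}^k\right],
\end{equation*}
and likewise for the $xy$ block, which is precisely the form the lemma claims. With this single replacement, the rest of your argument goes through verbatim.
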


Roughly speaking, Lemma \ref{lemma2} implies that if the stepsizes $\alpha_k^2$ and $\beta_k^2$ and the estimation errors of  $H_{yy}^k$ and $H_{xy}^k$ are decreasing fast enough, the error of $y^{k+1}$ will also decrease.

Since the RHS of both Lemmas \ref{lemma3} and \ref{lemma2} critically depend on the quality of  $H_{yy}^k$ and $H_{xy}^k$, we will next build upon the results in \citep[Lemma 2]{chen2020scsc} to analyze the estimation errors. 
\begin{lemma}[Estimation errors of $H_{xy}^k$ and $H_{yy}^k$]
\label{scsc-lemma2}
Suppose Assumptions 1--3 hold, and $H_{xy}^k$ and $H_{yy}^k$ are generated by running \eqref{eq.STABLE-H}. The mean square error of $H_{xy}^k$ satisfies 
\begin{align}\label{eq.scsc-lemma2-1}
\!\! \EE&\Big[\|H_{xy}^k-\nabla_{xy}^2g(x^k, y^k)\|^2\mid{\cal F}^k\Big] 
\leq(1-\tau_k)^2\|H_{xy}^{k-1}-\nabla_{xy}^2g(x^{k-1}, y^{k-1})\|^2+2\tau_k^2\sigma_{g_{xy}}^2\nonumber\\
&~~+2(1-\tau_k)^2(\bar{L}_{g_{xy}}^2+L_{g_{xy}}^2)\|x^k -x^{k-1}\|^2 +2(1-\tau_k)^2(\bar{L}_{g_{xy}}^2+L_{g_{xy}}^2)\|y^k-y^{k-1}\|^2\!\!
\end{align}
\newpage
where the constants $L_{g_{xy}}, L_{g_{yy}}, \bar{L}_{g_{xy}}, \bar{L}_{g_{yy}}, \sigma_{g_{xy}}, \sigma_{g_{yy}}$ are defined in Assumptions 1 and 3. 
And likewise, the mean square error of $H_{yy}^k$ satisfies 
\begin{align}\label{eq.scsc-lemma2-2}
 \!\!\! \EE&\Big[\|H_{yy}^k-\nabla_{yy}^2g(x^k, y^k)\|^2\mid{\cal F}^k\Big] 
\leq(1-\tau_k)^2\|H_{yy}^{k-1}-\nabla_{yy}^2g(x^{k-1}, y^{k-1})\|^2+2\tau_k^2\sigma_{g_{yy}}^2\nonumber\\
&~~+2(1-\tau_k)^2(\bar{L}_{g_{yy}}^2+L_{g_{yy}}^2)\|x^k-x^{k-1}\|^2 +2(1-\tau_k)^2(\bar{L}_{g_{yy}}^2+L_{g_{yy}}^2)\|y^k-y^{k-1}\|^2\!.\!
\end{align}
\end{lemma}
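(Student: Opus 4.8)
The plan is to treat both recursions in \eqref{eq.STABLE-H} as instances of a STORM-type variance-reduced estimator and to bound the tracking error by a contraction plus martingale-difference fluctuations, following the matrix-valued, projected analogue of \citep[Lemma 2]{chen2020scsc}. I will carry out the argument for $H_{xy}^k$; the bound for $H_{yy}^k$ in \eqref{eq.scsc-lemma2-2} is obtained by the identical computation with $\overline{\cal P}, C_{g_{xy}}, \sigma_{g_{xy}}, L_{g_{xy}}, \bar L_{g_{xy}}$ replaced by $\underline{\cal P}$ and the corresponding $yy$-constants. Abbreviate $A^k:=\nabla_{xy}^2 g(x^k,y^k)$ and $A^{k-1}:=\nabla_{xy}^2 g(x^{k-1},y^{k-1})$. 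The first step is to discard the projection: Assumption 3 together with Jensen's inequality gives $\|A^k\|\le C_{g_{xy}}$, so $A^k$ lies in the convex set $\{X:\|X\|\le C_{g_{xy}}\}$ onto which $\overline{\cal P}$ projects and is fixed by it. Since projection onto a closed convex set is non-expansive in Frobenius norm, $\|H_{xy}^k-A^k\|=\|\overline{\cal P}(Y^k)-\overline{\cal P}(A^k)\|\le\|Y^k-A^k\|$, where $Y^k:=(1-\tau_k)\big(H_{xy}^{k-1}-h_{xy}^{k-1}(\phi^k)\big)+h_{xy}^k(\phi^k)$ is the unprojected update. For $H_{yy}^k$ the analogous fact is that $\nabla_{yy}^2 g(x^k,y^k)\succeq\mu_g I$ by Assumption 2 lies in $\{X:X\succeq\mu_g I\}$, and the eigenvalue truncation $\underline{\cal P}$ is exactly the Frobenius projection onto that convex set, hence also non-expansive.

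Next I would perform the STORM error decomposition. Adding and subtracting $A^{k-1}$, write $Y^k-A^k=M_1+M_2+M_3$ with $M_1:=(1-\tau_k)(H_{xy}^{k-1}-A^{k-1})$, $M_2:=(1-\tau_k)\big[(h_{xy}^k(\phi^k)-A^k)-(h_{xy}^{k-1}(\phi^k)-A^{k-1})\big]$, and $M_3:=\tau_k(h_{xy}^k(\phi^k)-A^k)$; a direct expansion confirms this equals $(1-\tau_k)(H_{xy}^{k-1}-h_{xy}^{k-1}(\phi^k))+h_{xy}^k(\phi^k)-A^k$. The crucial structural observation is that $x^{k-1},y^{k-1},x^k,y^k$ and $H_{xy}^{k-1}$ are all ${\cal F}^k$-measurable, whereas $\phi^k$ is drawn fresh and independent of ${\cal F}^k$; hence $M_1$ is ${\cal F}^k$-measurable and, by unbiasedness (Assumption 3), $\EE[h_{xy}^k(\phi^k)\mid{\cal F}^k]=A^k$ and $\EE[h_{xy}^{k-1}(\phi^k)\mid{\cal F}^k]=A^{k-1}$, so $\EE[M_2\mid{\cal F}^k]=\EE[M_3\mid{\cal F}^k]=0$. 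Expanding $\EE[\|Y^k-A^k\|^2\mid{\cal F}^k]$, the cross terms $\langle M_1,\,M_2+M_3\rangle$ vanish in conditional expectation, leaving $\|M_1\|^2=(1-\tau_k)^2\|H_{xy}^{k-1}-A^{k-1}\|^2$ — the contraction term in \eqref{eq.scsc-lemma2-1} — plus the fluctuation $\EE[\|M_2+M_3\|^2\mid{\cal F}^k]$.

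It then remains to bound $\EE[\|M_2+M_3\|^2\mid{\cal F}^k]$. Because $M_2$ and $M_3$ share the sample $\phi^k$ and are not orthogonal, I would bypass their cross term by $\|M_2+M_3\|^2\le 2\|M_2\|^2+2\|M_3\|^2$, which is precisely where the factors $2$ in the statement originate. For $M_3$, the bounded-variance part of Assumption 3 gives $\EE[\|M_3\|^2\mid{\cal F}^k]=\tau_k^2\,\EE[\|h_{xy}^k(\phi^k)-A^k\|^2\mid{\cal F}^k]\le\tau_k^2\sigma_{g_{xy}}^2$. For $M_2$, writing $W:=h_{xy}^k(\phi^k)-h_{xy}^{k-1}(\phi^k)$ so that $M_2=(1-\tau_k)(W-\EE[W\mid{\cal F}^k])$, the inequality variance $\le$ second moment yields $\EE[\|M_2\|^2\mid{\cal F}^k]\le(1-\tau_k)^2\,\EE[\|W\|^2\mid{\cal F}^k]$. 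Since the same $\phi^k$ is evaluated at two iterates, the Lipschitz continuity of $\nabla_{xy}^2 g(\cdot,y;\phi)$ in $x$ and of $\nabla_{xy}^2 g(x,\cdot;\phi)$ in $y$ (Assumption 1) gives $\|W\|\le\bar L_{g_{xy}}\|x^k-x^{k-1}\|+L_{g_{xy}}\|y^k-y^{k-1}\|$, and $(a+b)^2\le 2a^2+2b^2$ converts this into the coefficients $(\bar L_{g_{xy}}^2+L_{g_{xy}}^2)$ multiplying $\|x^k-x^{k-1}\|^2$ and $\|y^k-y^{k-1}\|^2$. Collecting the three pieces gives \eqref{eq.scsc-lemma2-1}, and the verbatim argument with the $yy$-quantities gives \eqref{eq.scsc-lemma2-2}.

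The main obstacle, and essentially the only delicate point, is the measurability bookkeeping: because \eqref{eq.STABLE-H} reuses the single fresh sample $\phi^k$ at two different iterates, one must confirm that $x^{k-1},y^{k-1},x^k,y^k,H_{xy}^{k-1}$ are all determined before $\phi^k$ is drawn (so they are ${\cal F}^k$-measurable) in order for $M_2$ and $M_3$ to be genuine conditional-mean-zero terms and for the cross terms against $M_1$ to drop. Everything else is a routine combination of non-expansiveness of the projections, Young's inequality, and the Lipschitz and bounded-variance bounds of Assumptions 1 and 3.
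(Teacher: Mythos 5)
Your proposal follows essentially the same route as the paper's proof: non-expansiveness of the two projections, an exact conditional bias--variance decomposition in which the conditional mean produces the contraction term $(1-\tau_k)^2\|H_{xy}^{k-1}-\nabla^2_{xy}g(x^{k-1},y^{k-1})\|^2$, a factor-of-two splitting of the fluctuation, then variance-$\leq$-second-moment, bounded variance, and Lipschitz continuity. Your $M_1+M_2+M_3$ decomposition is the paper's bias--variance identity in disguise ($M_1=\EE[Y^k-A^k\mid{\cal F}^k]$ and $M_2+M_3$ is the centered part), and your explicit verification that the targets lie inside the projection sets (via Jensen's inequality for $\overline{\cal P}$ and Assumption 2 for $\underline{\cal P}$), as well as the measurability bookkeeping for the reused sample $\phi^k$, are points the paper handles only implicitly; these are welcome additions.

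There is, however, one step that as written does not produce the stated bound. From $\|W\|\leq \bar L_{g_{xy}}\|x^k-x^{k-1}\|+L_{g_{xy}}\|y^k-y^{k-1}\|$, the inequality $(a+b)^2\leq 2a^2+2b^2$ gives $\EE[\|W\|^2\mid{\cal F}^k]\leq 2\bar L_{g_{xy}}^2\|x^k-x^{k-1}\|^2+2L_{g_{xy}}^2\|y^k-y^{k-1}\|^2$, so after the outer factor $2(1-\tau_k)^2$ your fluctuation bound becomes $4(1-\tau_k)^2\bar L_{g_{xy}}^2\|x^k-x^{k-1}\|^2+4(1-\tau_k)^2L_{g_{xy}}^2\|y^k-y^{k-1}\|^2$. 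This neither equals nor is dominated by the lemma's $2(1-\tau_k)^2(\bar L_{g_{xy}}^2+L_{g_{xy}}^2)\big(\|x^k-x^{k-1}\|^2+\|y^k-y^{k-1}\|^2\big)$ unless $\bar L_{g_{xy}}=L_{g_{xy}}$, so the stated coefficients do not follow from Young's inequality, contrary to your claim. The fix is a one-line substitution: use the Cauchy--Schwarz form $(\bar L_{g_{xy}}a+L_{g_{xy}}b)^2\leq(\bar L_{g_{xy}}^2+L_{g_{xy}}^2)(a^2+b^2)$, which yields exactly $\EE[\|W\|^2\mid{\cal F}^k]\leq(\bar L_{g_{xy}}^2+L_{g_{xy}}^2)\big(\|x^k-x^{k-1}\|^2+\|y^k-y^{k-1}\|^2\big)$ and hence the lemma's inequality; this is what the paper's step (b) in its variance bound does implicitly. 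With that replacement, your argument (and its verbatim $yy$-analogue) is complete and correct.
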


Intuitively, the update of $x^k$ is bounded and so is the update of $y^k$, and thus $\|x^k-x^{k-1}\|^2={\cal O}(\alpha_{k-1}^2)$ and $\|y^k-y^{k-1}\|^2={\cal O}(\beta_{k-1}^2)$. Plugging them into the RHS of Lemma \ref{scsc-lemma2}, it suggests that if the stepsizes $\alpha_k^2, \beta_k^2, \tau_k^2$ are decreasing, then the estimation errors of $H_{xy}^k$ and $H_{yy}^k$ also decrease.

Applying Lemmas \ref{lemma3}--\ref{scsc-lemma2} to \eqref{eq.diff-Lya} and rearranging terms, we will be able to get
\begin{align*}\label{eq.diff-Lya-all}
 \EE[\mathbb{V}^{k+1}]  -  \EE[\mathbb{V}^k]   \leq    - c_1\EE[\|y^k-y^*(x^k)\|^2]   - c_2\EE[\|\widehat{x}(x^k)-x^k\|^2]+ c_3 \numberthis
\end{align*}
where the constants are $c_1={\cal O}(\beta_k)$, $c_2={\cal O}(\alpha_k)$ and $c_3={\cal O}(\alpha_k^2+\beta_k^2+\tau_k^2)$. By choosing stepsizes $\alpha_k, \beta_k, \tau_k$ as \eqref{eq.stepsize} and telescoping both sides of \eqref{eq.diff-Lya-all}, we obtain the main results in Theorem \ref{theorem1}.

\section{Auxiliary Lemmas}
In this section, we present some auxiliary lemmas that will be used frequently in the proof. 
\begin{lemma}[{\citep[Lemma 2.2]{ghadimi2018bilevel}}]\label{lemma_lip} 
Under Assumptions 1 and 2, we have
\begin{subequations}
		\begin{align}
	\|\overline{\nabla}_x f(x, y^*(x))-\overline{\nabla}_x f(x, y)\|
&	\leq  L_f\|y^*(x)-y\|\\
	\|\nabla F(x_1)-\nabla F(x_2)\|
&	\leq  L_F\|x_1-x_2\|\\
	\|y^*(x_1)-y^*(x_2)\|&\leq  L_y\|x_1-x_2\|
\end{align}
\end{subequations}
and the constants $L_f, L_y, L_F$ are defined as  
\begin{align*}
L_{f}&:=L_{f_x} + \frac{C_{g_{xy}}L_{f_y}}{\mu_g} + \frac{C_{f_y}}{\mu_g}\left(L_{fxy}+\frac{C_{g_{xy}}{L_{g_{yy}}}}{\mu_g}\right),~~~
L_y :=\frac{C_{g_{xy}}}{\mu_g}\\
L_{F}&:=\bar{L}_{f_x} + \frac{C_{g_{xy}}(\bar{L}_{f_y}+L_f)}{\mu_g} + \frac{C_{f_y}}{\mu_g}\left(\bar{L}_{fxy}+\frac{C_{g_{xy}}{\bar{L}_{g_{yy}}}}{\mu_g}\right) 
\end{align*}
where the constants are defined in Assumptions 1--3.
\end{lemma}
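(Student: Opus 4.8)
The plan is to establish the three Lipschitz bounds in the order: first the $y^*$-bound, then the $\overline{\nabla}_x f$-bound, and finally the $\nabla F$-bound, since the Lipschitzness of $y^*$ feeds directly into the bound for $\nabla F$. (The result is in fact standard; it is \citep[Lemma 2.2]{ghadimi2018bilevel}, so the work is entirely in the bookkeeping.)

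First I would prove the $L_y$-Lipschitz continuity of $y^*(\cdot)$. By Assumption 2 the lower-level objective is $\mu_g$-strongly convex, so $\nabla_{yy}^2 g(x,y)\succeq \mu_g I$ is invertible everywhere, and the stationarity condition $\nabla_y g(x,y^*(x))=0$ together with the implicit function theorem yields the closed-form Jacobian $\nabla_x y^*(x)^\top = -\nabla_{xy}^2 g(x,y^*(x))[\nabla_{yy}^2 g(x,y^*(x))]^{-1}$ already recorded in \eqref{grad-deter-2-1}. Bounding the two factors by $\|\nabla_{xy}^2 g\|\le C_{g_{xy}}$ (Assumption 3) and $\|[\nabla_{yy}^2 g]^{-1}\|\le 1/\mu_g$ (Assumption 2) gives $\|\nabla_x y^*(x)\|\le C_{g_{xy}}/\mu_g =: L_y$; integrating along the segment joining $x_1$ and $x_2$ then produces $\|y^*(x_1)-y^*(x_2)\|\le L_y\|x_1-x_2\|$.

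Next I would show that $\overline{\nabla}_x f(x,\cdot)$ is $L_f$-Lipschitz in $y$. Writing \eqref{grad-deter-3} as $\overline{\nabla}_x f(x,y)=\nabla_x f(x,y)-A(y)B(y)c(y)$ with $A(y):=\nabla_{xy}^2 g(x,y)$, $B(y):=[\nabla_{yy}^2 g(x,y)]^{-1}$, and $c(y):=\nabla_y f(x,y)$, I would split the difference of the triple products by the telescoping identity $A_1B_1c_1-A_2B_2c_2=(A_1-A_2)B_1c_1+A_2(B_1-B_2)c_1+A_2B_2(c_1-c_2)$. The only nonroutine piece is the perturbation of the inverse, handled by $B_1-B_2=B_1\big(\nabla_{yy}^2 g(x,y_2)-\nabla_{yy}^2 g(x,y_1)\big)B_2$, which with $\|B_i\|\le 1/\mu_g$ and the $L_{g_{yy}}$-Lipschitzness of $\nabla_{yy}^2 g$ gives $\|B_1-B_2\|\le (L_{g_{yy}}/\mu_g^2)\|y_1-y_2\|$. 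Bounding each telescoped term using $\|A\|\le C_{g_{xy}}$, $\|c\|\le C_{f_y}$, and the constants $L_{g_{xy}},L_{f_y},L_{f_x}$, then summing, reproduces the stated $L_f$. For the $L_F$-bound I would first use Proposition \ref{prop1} to note $\nabla F(x)=\overline{\nabla}_x f(x,y^*(x))$, then split $\nabla F(x_1)-\nabla F(x_2)$ into $\big[\overline{\nabla}_x f(x_1,y^*(x_1))-\overline{\nabla}_x f(x_2,y^*(x_1))\big]+\big[\overline{\nabla}_x f(x_2,y^*(x_1))-\overline{\nabla}_x f(x_2,y^*(x_2))\big]$. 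The second bracket is at most $L_f\|y^*(x_1)-y^*(x_2)\|\le L_f L_y\|x_1-x_2\|$ by the previous two parts, while the first bracket is the $x$-variation of $\overline{\nabla}_x f(\cdot,y)$ at fixed $y$, bounded by repeating the telescoping argument with the Lipschitz-in-$x$ constants $\bar L_{f_x},\bar L_{f_y},\bar L_{g_{xy}},\bar L_{g_{yy}}$ from Assumption 1. Absorbing $L_fL_y=C_{g_{xy}}L_f/\mu_g$ into the $\bar L_{f_y}$ term (which is exactly why $L_F$ carries the factor $C_{g_{xy}}(\bar L_{f_y}+L_f)/\mu_g$) yields the claimed $L_F$.

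The main obstacle I anticipate is purely bookkeeping: keeping straight which derivative is perturbed in which argument ($x$ versus $y$), so that the correct Lipschitz constant ($\bar L$ versus $L$) is applied at each telescoped term, and correctly propagating the matrix-inverse perturbation bound through the product rule. There is no conceptual difficulty beyond strong convexity supplying invertibility with the uniform spectral bound $\|[\nabla_{yy}^2 g]^{-1}\|\le 1/\mu_g$.
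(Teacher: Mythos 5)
Your proposal is correct. The paper itself contains no proof of this lemma---it is imported with the citation \citep[Lemma 2.2]{ghadimi2018bilevel}---and your argument is a faithful reconstruction of that reference's standard proof: the implicit-function-theorem Jacobian with the spectral bounds $\|\nabla_{xy}^2 g\|\le C_{g_{xy}}$, $\|[\nabla_{yy}^2 g]^{-1}\|\le 1/\mu_g$ for $L_y$; the telescoped triple-product decomposition with the inverse-perturbation identity $B_1-B_2=B_1\bigl(B_2^{-1}-B_1^{-1}\bigr)B_2$ for $L_f$; and the $x$-versus-$y^*(x)$ splitting, with $L_fL_y$ absorbed into the $\bar L_{f_y}$ term, for $L_F$. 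Your bookkeeping reproduces the stated constants exactly once the paper's symbols $L_{fxy}$ and $\bar L_{fxy}$ are read as typos for $L_{g_{xy}}$ and $\bar L_{g_{xy}}$, and your use of Assumption 3 for the bounds $C_{g_{xy}}, C_{f_y}$ is consistent with the lemma's own remark that its constants are defined in Assumptions 1--3.
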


 \section{Proof of Proposition \ref{prop1}}
 \begin{proof}
Define the Jacobian matrix
\begin{align*}
  \nabla_xy(x) = 
  \begin{bmatrix}
    \frac{\partial}{\partial x_1} y_1 (x) & \cdots & \frac{\partial}{\partial x_d} y_1 (x)\\
    & \cdots & \\
    \frac{\partial}{\partial x_1} y_{d_y}(x) & \cdots & \frac{\partial}{\partial x_d} y_{d_y} (x)
  \end{bmatrix}.
\end{align*}
By the chain rule, it follows that
\begin{align}\label{grad-deter}
	\nabla F(x):=\nabla_xf\left(x, y^*(x)\right)+\nabla_xy^*(x)^{\top}\nabla_y f\left(x, y^*(x)\right). 
\end{align}
The minimizer $y^*(x)$ satisfies 
\begin{align}
    \nabla_y g(x,y^*(x)) = 0,\quad\text{thus}~~~\nabla_x\left(\nabla_y g(x,y^*(x))\right) = 0.
\end{align}
By the chain rule again, it follows that
\begin{align*}
    \nabla_{xy}^2g\left(x, y^*(x)\right)
    + \nabla_xy^*(x)^{\top}\nabla_{yy}^2 g\left(x, y^*(x)\right) = 0.
\end{align*}
By Assumption 2,  $\nabla_{yy}^2 g\left(x, y^*(x)\right)$ is invertible, so
\begin{align}\label{grad-deter-1}
	\nabla_xy^*(x)^{\top}:=-\nabla_{xy}^2g\left(x, y^*(x)\right)\left[\nabla_{yy}^2 g\left(x, y^*(x)\right)\right]^{-1}. 
\end{align}
By substituting \eqref{grad-deter-1} into \eqref{grad-deter}, we arrive at \eqref{grad-deter-2}. 
\end{proof}

 \section{Proof of Lemma \ref{lemma3}}
 \begin{proof}
Now we turn to analyze the update of $x$. For convenience, we define the update in \eqref{eq.STABLE2} as
\begin{equation}\label{eq.STABLE-h}	
x^{k+1} =\mathcal{P}_{\mathcal{X}}\left(x^k - \alpha_k \bar{h}_f^k\right) ~~~~~~{\rm with}~~~~~~\bar{h}_f^k  := \nabla_xf\left(x^k, y^k;\xi^k\right)-H_{xy}^k(H_{yy}^k)^{-1}\nabla_y f\left(x^k, y^k;\xi^k\right)
\end{equation}
and let $\widehat{x}^{k}$ and $\widehat{x}$ denote $\widehat{x} (x^{k})$ and $\widehat{x} (x)$.

For $\forall x\in\mathcal{X}$, using the weakly convexity of $F$, we know that
\begin{align*}
    F(\hat x)\geq F(x)+\langle\nabla F(x),\hat x-x\rangle+\frac{\mu_F}{2}\|\hat x-x\|^2
\end{align*}
On the other hand, by the definition of $\hat x$, $\forall x\in\mathcal{X}$, it holds that
\begin{align*}
    F(x)+\frac{\rho}{2}\|x-x\|^2-F(\hat x)-\frac{\rho}{2}\|\hat x-x\|^2=F(x)-F(\hat x)-\frac{\rho}{2}\|\hat x-x\|^2\geq 0.
\end{align*}
Adding above two inequalities, we get that
\begin{align*}
    \langle\nabla F(x),\hat x-x\rangle\leq-\frac{\mu_F+\rho}{2}\|\hat x-x\|^2.
\end{align*}
If we choose $\mu_F$ such that $\mu_F+\rho>0$ and using the definition of Moreau Envelop, we have that
\begin{align*}
    \Phi_{1/\rho}(x^{k+1})&=F(\hat x^{k+1})+\frac{\rho}{2}\|x^{k+1}-\hat x^{k+1}\|^2\leq F(\hat x^{k})+\frac{\rho}{2}\|x^{k+1}-\hat x^{k}\|^2\\
    &= F(\hat x^{k})+\frac{\rho}{2}\| x^{k}-\hat x^{k}\|^2+\frac{\rho}{2}\|x^{k+1}-x^{k}\|^2+\rho\langle x^k- \hat x^k,x^{k+1}- x^k\rangle\\
    &= \Phi_{1/\rho}(x^k)+\frac{\rho}{2}\|x^{k+1}-x^{k}\|^2+\rho\langle x^{k+1}- \hat x^k,x^{k+1}- x^k\rangle-\rho\|x^k-x^{k+1}\|^2\\
    &\leq \Phi_{1/\rho}(x^k)+\rho\alpha_k\langle \hat x^k- x^k,h_f^k\rangle+\rho\alpha_k\langle h_f^k, x^k- x^{k+1}\rangle\numberthis\label{eq:hong}
\end{align*}
where the fourth inequality is due to $\langle x^k-\alpha_k h_f^k-x^{k+1}, \hat x^k-x^{k+1}\rangle\leq 0$ using the definition of $\mathcal{P}_\mathcal{X}$. 
Then taking the conditional expectation of both sides in \eqref{eq:hong}, we have that 
\begin{align*}
    \EE\left[\Phi_{1/\rho}(x^{k+1})|\mathcal{F}^k\right]&\leq \Phi_{1/\rho}(x^{k})+\rho \alpha_k\mathbb{E}\left[\left\langle\widehat{x}^{k}-x^{k}, \bar h_{f}^{k}\right\rangle | \mathcal{F}_{k}\right]+\alpha_k^{2} \rho\EE[\left\|\bar h_{f}^{k}\right\|^{2}|\mathcal{F}^k]\\
    &\leq \Phi_{1/\rho}(x^{k})+2 \rho\alpha_k^2\left(C_{f_x}^2 + \left(\frac{C_{g_{xy}}}{\mu_g}\right)^2C_{f_y}^2\right) +\rho \alpha_k\mathbb{E}\left[\left\langle\widehat{x}^{k}-x^{k}, \bar h_{f}^{k}\right\rangle | \mathcal{F}_{k}\right]\numberthis
    \label{phi_descent}
\end{align*}
where the second inequality comes from \eqref{eq.lemma2-pf-9}. Then we bound the third term in \eqref{phi_descent} and get that
\begin{align*}
    \mathbb{E}\left[\left\langle\widehat{x}^{k}-x^{k}, \bar h_{f}^{k}\right\rangle | \mathcal{F}_{k}\right]
    &\leq \mathbb{E}\left[\left\langle\widehat{x}^{k}-x^{k}, \bar h_{f}^{k}-\bar\nabla_x f(x^k,y^k)+\bar\nabla_x f(x^k,y^k)-\nabla F(x^k)+\nabla F(x^k)\right\rangle| \mathcal{F}_{k}\right]\\
    &\leq \left\langle\widehat{x}^{k}-x^{k}, \EE\left[\bar h_{f}^{k}|\mathcal{F}^k\right]-\bar\nabla_x f(x^k,y^k)\right\rangle+\mathbb{E}\left[\left\langle\widehat{x}^{k}-x^{k}, \bar{\nabla}_{x} f(x^{k}, y^{k})-\nabla F(x^{k})\right\rangle| \mathcal{F}_{k}\right]\\
    &~~~+\mathbb{E}\left[\left\langle\widehat{x}^{k}-x^{k}, \nabla F(x^{k})\right\rangle| \mathcal{F}_{k}\right]\\
    &\leq \frac{\gamma_k}{4}\|\widehat{x}^k-x^k\|^2+\frac{\|\nabla_y f(x^k,y^k)\|^2}{\gamma_k} \EE\left[\big\|(H_{yy}^k)^{-1}H_{xy}^k - H_{yy}(x^k,y^k)^{-1}H_{xy}(x^k,y^k)\big\|^2|\mathcal{F}^k\right]\\
    &~~~+\frac{\gamma_k}{4}\|\widehat{x}^k-x^k\|^2+\frac{1}{\gamma_k}\|\bar{\nabla}_{x} f(x^{k}, y^{k})-\nabla F(x^{k})\|^2-\frac{\mu_F+\rho}{2}\|\widehat{x}^k-x^k\|^2\\
    &\leq \frac{\gamma_k}{2}\|\widehat{x}^k-x^k\|^2+\frac{2C_{f_y}^2}{\gamma_k\mu_g^2} \left[\frac{C_{g_{xy}}^2}{\mu_g^2}\EE[\|H_{yy}^k - H_{yy}(x^k,y^k)\|^2|{\cal F}^k] + \EE[\|H_{xy}^k-H_{xy}(x^k,y^k)\|^2|{\cal F}^k] \right]\\
    &~~~+\frac{L_f^2}{\gamma_k}\|y^k-y^*(x^k)\|^2-\frac{\mu_F+\rho}{2}\|\widehat{x}^k-x^k\|^2
\end{align*}
where the third inequality uses Young's inequality with parameter $\gamma_k$, (61) in \citep{hong2020ac} and the fact that
\begin{equation}
	\EE_{\xi^k}[\bar{h}_f^k|{\cal F}^k] = \nabla_xf\left(x^k, y^k\right)-(H_{yy}^k)^{-1}H_{xy}^k\nabla_y f\left(x^k, y^k\right);
\end{equation}
and the last inequality follows the same steps of \eqref{eq.lemma2-pf-6} and Assumption 3. We choose $\gamma_k=\frac{\mu_F+\rho}{2}$, then we get
\begin{align*}
    \mathbb{E}\left[\left\langle\widehat{x}^{k}-x^{k}, \bar h_{f}^{k}\right\rangle | \mathcal{F}_{k}\right]
    &\leq -\frac{\mu_F+\rho}{4}\|\widehat{x}^k-x^k\|^2+\frac{4C_{f_y}^2C_{g_{xy}}^2}{(\mu_F+\rho)\mu_g^4} \EE[\|H_{yy}^k - H_{yy}(x^k,y^k)\|^2|{\cal F}^k]\\
    &~~~+ \frac{4C_{f_y}^2}{(\mu_F+\rho)\mu_g^2}  \EE[\|H_{xy}^k-H_{xy}(x^k,y^k)\|^2|{\cal F}^k]+\frac{2L_f^2}{\mu_F+\rho}\|y^k-y^*(x^k)\|^2\numberthis
    \label{product}
\end{align*}

Plugging \eqref{product} into \eqref{phi_descent} and taking expectation over all the randomness lead to the lemma. 
\end{proof}

\section{Proof of Lemma \ref{lemma2}}
\begin{proof}
We start by decomposing the error of the lower level variable as
\begin{align*}\label{eq.lemma2-pf-1}
&~~~~~\EE\left[\|y^{k+1}-y^*(x^{k+1})\|^2|{\cal F}^k\right]\\
&=\EE\left[\|y^k - \beta_k h_g^k - y^*(x^k) + y^*(x^k) - y^*(x^{k+1}) - (H_{yy}^k)^{-1}(H_{xy}^k)^{\top}(x^{k+1}-x^k)\|^2|{\cal F}^k\right]\\
&\leq (1+\varepsilon)\underbracket{\EE[\|y^k - \beta_k h_g^k - y^*(x^k)\|^2|{\cal F}^k]}_{I_1}\\
&\quad + (1+\varepsilon^{-1})\underbracket{\EE[\|y^*(x^k) - y^*(x^{k+1}) - (H_{yy}^k)^{-1}(H_{xy}^k)^{\top}(x^{k+1}-x^k)\|^2|{\cal F}^k]}_{I_2}.  \numberthis
\end{align*}

The upper bound of $I_1$ can be derived as
\begin{align*}\label{eq.lemma2-pf-2}
I_1&=\|y^k - y^*(x^k)\|^2 - 2\beta_k\EE[\dotp{y^k-y^*(x^k), h_g^k}|{\cal F}^k] + \beta_k^2\EE[\|h_g^k\|^2|{\cal F}^k]\\
& \stackrel{(a)}{\leq}\|y^k-y^*(x^k)\|^2 - 2\beta_k\dotp{y^k-y^*(x^k), \nabla_y g(x^k, y^k)} + \beta_k^2\|\nabla_yg(x^k,y^k)\|^2 + \beta_k^2\sigma_{g_y}^2\\
& \stackrel{(b)}{\leq} \left(1-\frac{2\mu_g L_g}{\mu_g + L_g}\beta^k\right)\|y^k-y^*(x^k)\|^2 + \beta_k\left(\beta_k-\frac{2}{\mu_g + L_g}\right)\|\nabla_y g(x^k,y^k)\|^2 + \beta_k^2\sigma_{g_y}^2\\
& \stackrel{(c)}{\leq} \left(1-\frac{2\mu_g L_g}{\mu_g + L_g}\beta^k\right)\|y^k-y^*(x^k)\|^2 + \beta_k^2\sigma_{g_y}^2\numberthis
\end{align*}
where (a) comes from the fact that $\Var[X]=\EE[X^2]-\EE[X]^2$, (b) follows from the $\mu_g$-strong convexity and $L_g$ smoothness of $g(x, y)$ \citep[Theorem 2.1.11]{nesterov2013}, and (c) follows from the choice of stepsize $\beta_k\leq \frac{\mu_g /L_g}{32(\mu_g+L_g) }\leq \frac{2}{\mu_g+L_g}$ in \eqref{eq.stepsize_1}.

The upper bound of $I_2$ can be derived as
\begin{align*}\label{eq.lemma2-pf-3}
I_2&=\EE\left[\left\|y^*(x^k) - y^*(x^{k+1}) - (H_{yy}^k)^{-1}(H_{xy}^k)^{\top}(x^{k+1}-x^k)\right\|^2|{\cal F}^k\right]\\
&\leq 3\EE\left[\left\|y^*(x^{k+1})-y^*(x^k)-\nabla_xy^*(x^k)(x^{k+1}-x^k)\right\|^2|{\cal F}^k\right]\\
&\quad + 3\EE\left[\left\|\left(\nabla_xy^*(x^k) - H_{yy}(x^k,y^k)^{-1}H_{xy}(x^k,y^k)^{\top}\right)(x^{k+1}-x^k)\right\|^2|{\cal F}^k\right]\\
&\quad + 3\EE\left[\left\|\left(H_{yy}(x^k,y^k)^{-1}H_{xy}(x^k,y^k)^{\top} - (H_{yy}^k)^{-1}(H_{xy}^k)^{\top}\right)(x^{k+1}-x^k)\right\|^2|{\cal F}^k\right]. \numberthis
\end{align*}

We first bound the first approximation error in the RHS of \eqref{eq.lemma2-pf-3} by
\begin{align*}\label{eq.lemma2-pf-4}
 & \left\|y^*(x^{k+1})-y^*(x^k)-\nabla_xy^*(x^k)(x^{k+1}-x^k)\right\|^2 \\
=  & \left\|\int_0^1\nabla_xy^*(x^k+t(x^{k+1}-x^k))(x^{k+1}-x^k)dt-\nabla_xy^*(x^k)(x^{k+1}-x^k)\right\|^2\\
\leq  &\int_0^1\left\|\nabla_xy^*(x^k+t(x^{k+1}-x^k))-\nabla_xy^*(x^k)\right\|^2\|x^{k+1}-x^k\|^2dt 
\leq  \frac{L_y^2}{2}\|x^{k+1}-x^k\|^4 \numberthis
\end{align*}
where the first inequality follows from the Cauchy-Schwarz inequality, and the second inequality follows from the $L_y$-Lipschitz continuity of $\nabla_xy^*(x)$ in Lemma \ref{lemma_lip}.

Next we bound the second term in the RHS of \eqref{eq.lemma2-pf-3} as
\begin{align*}\label{eq.lemma2-pf-3-1}
& \EE\left[\left\|\left(\nabla_xy^*(x^k) - H_{yy}(x^k,y^k)^{-1}H_{xy}(x^k,y^k)^{\top}\right)(x^{k+1}-x^k)\right\|^2|{\cal F}^k\right]\\
 \leq & \EE\left[\left\|\nabla_xy^*(x^k) - H_{yy}(x^k,y^k)^{-1}H_{xy}(x^k,y^k)^{\top}\right\|^2 \left\|x^{k+1}-x^k\right\|^2|{\cal F}^k\right]\numberthis
\end{align*}
and likewise, the third term of \eqref{eq.lemma2-pf-3} as
\begin{align*}\label{eq.lemma2-pf-3-2}
& \EE\left[\left\|\left(H_{yy}(x^k,y^k)^{-1}H_{xy}(x^k,y^k)^{\top} - (H_{yy}^k)^{-1}(H_{xy}^k)^{\top}\right)(x^{k+1}-x^k)\right\|^2|{\cal F}^k\right]\\
 \leq & \EE\left[\left\|H_{yy}(x^k,y^k)^{-1}H_{xy}(x^k,y^k)^{\top} - (H_{yy}^k)^{-1}(H_{xy}^k)^{\top}\right\|^2 \left\|x^{k+1}-x^k\right\|^2|{\cal F}^k\right].\numberthis
\end{align*}

We then bound the approximation error of $H_{yy}(x^k,y^k)^{-1}H_{xy}(x^k,y^k)^{\top}$ in \eqref{eq.lemma2-pf-3-1} by
\begin{align*}\label{eq.lemma2-pf-5-1}
 & \left\|\nabla_xy^*(x^k) - H_{yy}(x^k,y^k)^{-1}H_{xy}(x^k,y^k)^{\top}\right\|^2\\
\!\!=  & \left\|H_{yy} \left(x^k, y^*(x^k)\right)^{-1}H_{xy} \left(x^k, y^*(x^k)\right)^{\top} - H_{yy}(x^k,y^k)^{-1}H_{xy}(x^k,y^k)^{\top}\right\|^2 \\
\!\!=  & \Big\|H_{yy} \left(x^k, y^*(x^k)\right)^{-1}H_{xy} \left(x^k, y^*(x^k)\right)^{\top} - H_{yy}(x^k,y^k)^{-1}H_{xy} \left(x^k, y^*(x^k)\right)^{\top} \\
\!\!  &\qquad + H_{yy}(x^k,y^k)^{-1}H_{xy} \left(x^k, y^*(x^k)\right)^{\top} - H_{yy}(x^k,y^k)^{-1}H_{xy}(x^k,y^k)^{\top}\Big\|^2\\
\!\!\leq  & 2C_{g_{xy}}^2\Big\|H_{yy} \left(x^k, y^*(x^k)\right)^{-1} \!\!-\! H_{yy}(x^k,y^k)^{-1} \Big\|^2 +\frac{2}{\mu_g^2}\Big\|  H_{xy} \left(x^k, y^*(x^k)\right) \!-\!  H_{xy}(x^k,y^k)\Big\|^2\numberthis
\end{align*}
where the inequality follows from $\|H_{xy}(x,y)\|\leq C_{g_{xy}}$ and $H_{yy}(x,y)\succeq\mu_g I$.

Note that 
\begin{align*}\label{eq.lemma2-pf-5-2}
&\Big\|H_{yy} \left(x^k, y^*(x^k)\right)^{-1} \!-\! H_{yy}(x^k,y^k)^{-1} \Big\|^2\\
=&\Big\|H_{yy} \left(x^k, y^*(x^k)\right)^{-1} \Big(H_{yy} \left(x^k, y^*(x^k)\right)\!-\! H_{yy}(x^k,y^k)\Big) H_{yy}(x^k,y^k)^{-1} \Big\|^2\\
\leq&\Big\|H_{yy} \left(x^k, y^*(x^k)\right)^{-1} \Big\|^2\Big\| H_{yy} \left(x^k, y^*(x^k)\right)\!-\! H_{yy}(x^k,y^k)\Big\|^2 \Big\|H_{yy}(x^k,y^k)^{-1} \Big\|^2\\
\leq  & \frac{1}{\mu_g^4}\Big\|H_{yy} \left(x^k, y^*(x^k)\right)- H_{yy}(x^k,y^k)\Big\|^2  \numberthis
\end{align*}
where the last inequality follows from $H_{yy}(x,y)\succeq\mu_g I$.

Therefore, we have
\begin{align*}\label{eq.lemma2-pf-5}
 & \left\|\nabla_xy^*(x^k) - H_{yy}(x^k,y^k)^{-1}H_{xy}(x^k,y^k)^{\top}\right\|^2\\
 \leq  & \frac{2C_{g_{xy}}^2}{\mu_g^4}\Big\|H_{yy} \left(x^k, y^*(x^k)\right)- H_{yy}(x^k,y^k)\Big\|^2  +\frac{2}{\mu_g^2}\Big\|  H_{xy} \left(x^k, y^*(x^k)\right) -  H_{xy}(x^k,y^k)\Big\|^2.\numberthis
\end{align*}

Following the steps towards \eqref{eq.lemma2-pf-5}, we bound the error of $(H_{yy}^k)^{-1}(H_{xy}^k)^{\top}$ in \eqref{eq.lemma2-pf-3-2} by
\begin{align}\label{eq.lemma2-pf-6}
 & \left\|(H_{yy}^k)^{-1}(H_{xy}^k)^{\top} - H_{yy}(x^k,y^k)^{-1}H_{xy}(x^k,y^k)^{\top}\right\|^2\nonumber\\
 =&\Big\|(H_{yy}^k)^{-1}(H_{xy}^k)^{\top}\!-\!H_{yy}(x^k,y^k)^{-1}(H_{xy}^k)^{\top}\!+\!H_{yy}(x^k,y^k)^{-1}(H_{xy}^k)^{\top} \!-\! H_{yy}(x^k,y^k)^{-1}H_{xy}(x^k,y^k)^{\top}\Big\|^2\nonumber\\
 \leq &2\Big\|(H_{yy}^k)^{-1}(H_{xy}^k)^{\top}\!-\!H_{yy}(x^k,y^k)^{-1}(H_{xy}^k)^{\top}\Big\|^2+2\Big\|H_{yy}(x^k,y^k)^{-1}(H_{xy}^k)^{\top} \!-\! H_{yy}(x^k,y^k)^{-1}H_{xy}(x^k,y^k)^{\top}\Big\|^2\nonumber\\
 \leq &\frac{2C_{g_{xy}}^2}{\mu_g^4}\Big\|H_{yy}^k - H_{yy}(x^k,y^k)\Big\|^2+ \frac{2}{\mu_g^2}\Big\|H_{xy}^k-H_{xy}(x^k,y^k)\Big\|^2
\end{align}
where the second inequality follows from $\|H_{xy}^k\|\leq C_{g_{xy}}$ and $H_{yy}^k\succeq\mu_g I$.

Plugging \eqref{eq.lemma2-pf-4}-\eqref{eq.lemma2-pf-6} back to \eqref{eq.lemma2-pf-3}, we have
\begin{align*}\label{eq.lemma2-pf-7-0}
I_2&\leq \frac{3L_y^2}{2}\EE[\|x^{k+1}-x^k\|^4|{\cal F}^k] +  \frac{6C_{g_{xy}}^2}{\mu_g^4}\|H_{yy}(x^k,y^*(x^k))-H_{yy}(x^k,y^k)\|^2\EE[\|x^{k+1}-x^k\|^2|{\cal F}^k]\\
&\quad + \frac{6}{\mu_g^2}\|H_{xy}(x^k,y^*(x^k))-H_{xy}(x^k,y^k)\|^2 \EE[\|x^{k+1}-x^k\|^2|{\cal F}^k]\\
&\quad + \frac{6C_{g_{xy}}^2}{\mu_g^4}\EE[\|H_{yy}^k - H_{yy}(x^k,y^k)\|^2\|x^{k+1}-x^k\|^2|{\cal F}^k] \\
&\quad+ \frac{6}{\mu_g^2}\EE[\|H_{xy}^k-H_{xy}(x^k,y^k)\|^2\|x^{k+1}-x^k\|^2|{\cal F}^k]. \numberthis
\end{align*}

Using the Lipschitz continuity of $H_{xy}(x,y)$ and $H_{yy}(x,y)$ in Assumption 1, from \eqref{eq.lemma2-pf-7-0}, we have
\begin{align*}\label{eq.lemma2-pf-7}
I_2&\leq \frac{3L_y^2}{2}\EE[\|x^{k+1}-x^k\|^4|{\cal F}^k] + \frac{6}{\mu_g^2}\left(\frac{C_{g_{xy}}^2L_{g_{yy}}}{\mu_g^2}+L_{g_{xy}}\right)\|y^k-y^*(x^k)\|^2\EE[\|x^{k+1}-x^k\|^2|{\cal F}^k]\\
&\quad +  \frac{6C_{g_{xy}}^2}{\mu_g^4}\EE[\|H_{yy}^k - H_{yy}(x^k,y^k)\|^2\|x^{k+1}-x^k\|^2|{\cal F}^k]\\
&\quad +  \frac{6}{\mu_g^2}\EE[\|H_{xy}^k-H_{xy}(x^k,y^k)\|^2\|x^{k+1}-x^k\|^2|{\cal F}^k]. \numberthis
\end{align*}

For any $p=2, 4$, we next analyze quantity $\EE[\|x^{k+1}-x^k\|^p|{\cal F}^k]$ in \eqref{eq.lemma2-pf-7}. Recall the simplified update \eqref{eq.STABLE-h}.
Therefore, we have $ \|x^{k+1}-x^k\| \leq \alpha_k\|\bar{h}_f^k\|$ and
\begin{align*}\label{eq.lemma2-pf-8}
  \|\bar{h}_f^k\| &=\Big\|\nabla_xf\left(x^k, y^k;\xi^k\right)-(H_{yy}^k)^{-1}H_{xy}^k\nabla_y f\left(x^k, y^k;\xi^k\right)\Big\|\\
     &\leq \big\|\nabla_xf(x^k, y^k;\xi^k)\big\|+ \Big\|(H_{yy}^k)^{-1}H_{xy}^k\nabla_y f\left(x^k, y^k;\xi^k\right)\Big\|\\
    &\stackrel{(a)}{\leq} \big\|\nabla_xf(x^k, y^k;\xi^k)\big\|+\frac{C_{g_{xy}}}{\mu_g}\big\|\nabla_y f(x^k, y^k;\xi^k)\big\|  \numberthis
\end{align*}
where (a) follows from the upper and lower projections of $H_{xy}^k$ and $H_{yy}^k$ in \eqref{eq.STABLE-H}.

Therefore, for $p=2, 4$, we have 
\begin{align*}\label{eq.lemma2-pf-9}
    \EE[\|\bar{h}_f^k\|^p|{\cal F}^k,H_{x,y}^k,H_{yy}^k]&\leq 2^{p-1} \EE\Big[\|\nabla_xf(x^k,y^k;\xi^k)\|^p|{\cal F}^k,H_{x,y}^k,H_{yy}^k\Big]\\
    &\quad + 2^{p-1}\left(\frac{C_{g_{xy}}}{\mu_g}\right)^p \EE\Big[\|\nabla_y f(x^k, y^k;\xi^k)\|^p|{\cal F}^k,H_{x,y}^k,H_{yy}^k\Big]\\
    &\leq 2^{p-1}\left(C_{f_x}^p + \left(\frac{C_{g_{xy}}}{\mu_g}\right)^pC_{f_y}^p\right) \numberthis
\end{align*}
where the last inequality from Assumption 3. And thus 
\begin{equation}\label{eq.lemma2-pf-9-2}
        \EE\left[\|x^{k+1}-x^k\|^p|{\cal F}^k,H_{xy}^k,H_{yy}^k\right] \leq 2^{p-1}\left(C_{f_x}^p + \left(\frac{C_{g_{xy}}}{\mu_g}\right)^pC_{f_y}^p\right)\alpha_k^p.
\end{equation}

Plugging \eqref{eq.lemma2-pf-9-2} into \eqref{eq.lemma2-pf-7}, we have
\begin{align*}\label{eq.lemma2-pf-10}
I_2&\leq 12L_y^2 \left(C_{f_x}^4 + \left(\frac{C_{g_{xy}}}{\mu_g}\right)^4C_{f_y}^4\right) \alpha_k^4\\
&\quad + \frac{12}{\mu_g^2}\left(\frac{C_{g_{xy}}^2L_{g_{yy}}}{\mu_g^2}+L_{g_{xy}}\right)\left(C_{f_x}^2 + \left(\frac{C_{g_{xy}}}{\mu_g}\right)^2C_{f_y}^2\right)\|y^k-y^*(x^k)\|^2\alpha_k^2\\
&\quad + \frac{12C_{g_{xy}}^2}{\mu_g^4}\left(C_{f_x}^2 + \left(\frac{C_{g_{xy}}}{\mu_g}\right)^2C_{f_y}^2\right)\EE[\|H_{yy}^k - H_{yy}(x^k,y^k)\|^2|{\cal F}^k]\alpha_k^2\\
&\quad + \frac{12}{\mu_g^2}\left(C_{f_x}^2 + \left(\frac{C_{g_{xy}}}{\mu_g}\right)^2C_{f_y}^2\right)\EE[\|H_{xy}^k-H_{xy}(x^k,y^k)\|^2|{\cal F}^k]\alpha_k^2. \numberthis
\end{align*}

Now let us define the constants as
\begin{align*}
    \tilde{c}_1&:=\max\Bigg\{12L_y^2 \left(C_{f_x}^4 + \left(\frac{C_{g_{xy}}}{\mu_g}\right)^4C_{f_y}^4\right), \frac{12}{\mu_g^2}\left(\frac{C_{g_{xy}}^2L_{g_{yy}}}{\mu_g^2}+L_{g_{xy}}\right)\left(C_{f_x}^2 + \left(\frac{C_{g_{xy}}}{\mu_g}\right)^2C_{f_y}^2\right),\\ 
    &\qquad\qquad \frac{12C_{g_{xy}}^2}{\mu_g^4}\left(C_{f_x}^2 + \left(\frac{C_{g_{xy}}}{\mu_g}\right)^2C_{f_y}^2\right),
    \frac{12}{\mu_g^2}\left(C_{f_x}^2 + \left(\frac{C_{g_{xy}}}{\mu_g}\right)^2C_{f_y}^2\right)\Bigg\}\\
    \tilde{c}_2&:=\frac{2}{\mu_g + L_g}+\frac{\mu_g + L_g}{\mu_g L_g},~~~~c:=\tilde{c}_1\tilde{c}_2.
\end{align*}

Plugging the upper bounds of $I_1$ in \eqref{eq.lemma2-pf-2} and $I_2$ in \eqref{eq.lemma2-pf-10} into \eqref{eq.lemma2-pf-1} with $\epsilon=\frac{\mu_g L_g}{\mu_g + L_g}\beta^k$, we have
\begin{align*}
&~~~~~\EE\left[\|y^{k+1}-y^*(x^{k+1})\|^2|{\cal F}^k\right]\\
&\leq \left(1-\frac{\mu_g L_g}{\mu_g + L_g}\beta^k\right)\|y^k-y^*(x^k)\|^2 +  \left(1+\frac{\mu_g L_g}{\mu_g + L_g}\beta^k\right)\beta_k^2\sigma_{g_y}^2 + \tilde{c}_1\tilde{c}_2\frac{\alpha_k^4}{\beta_k} \\
&\quad + \tilde{c}_1\tilde{c}_2\frac{\alpha_k^2}{\beta_k}\|y^k-y^*(x^k)\|^2 + \tilde{c}_1\tilde{c}_2\EE\left[\|H_{yy}^k - H_{yy}(x^k,y^k)\|^2|{\cal F}^k\right]\frac{\alpha_k^2}{\beta_k} \\
&\quad + \tilde{c}_1\tilde{c}_2\EE\left[\|H_{xy}^k - H_{xy}(x^k,y^k)\|^2|{\cal F}^k\right]\frac{\alpha_k^2}{\beta_k} \numberthis
\end{align*}
where we have used the fact that 
\begin{align*}
	\left(1+\frac{\mu_g L_g}{\mu_g + L_g}\beta^k\right) \left(1-\frac{2\mu_g L_g}{\mu_g + L_g}\beta^k\right) &\leq 	  1-\frac{\mu_g L_g}{\mu_g + L_g}\beta^k \\
	 \left(1+\left(\frac{\mu_g L_g}{\mu_g + L_g}\beta^k\right)^{-1}\right)&\leq \frac{1}{\beta_k}\left(\frac{2}{\mu_g + L_g}+\frac{\mu_g + L_g}{\mu_g L_g}\right)=\frac{\tilde{c}_2}{\beta_k}
\end{align*}
where the last inequality uses $\beta_k\leq \frac{2}{\mu_g+L_g}$ in \eqref{eq.stepsize_1}. 
The proof is complete by defining $c:=\tilde{c}_1\tilde{c}_2$.
\end{proof}

\section{Proof of Lemma \ref{scsc-lemma2}}
\begin{proof}
Recall that $g(x,y)=\EE_{\phi}[g(x,y,\phi)]$. We only have access to the stochastic estimates of $\nabla_{xy}^2g\left(x, y\right), \nabla_{yy}^2 g\left(x, y\right)$, that is
\begin{equation}
  h_{yy}^k(\phi):=\nabla_{yy}^2 g\left(x^k,y^k;\phi\right),~~~~~  h_{xy}^k(\phi):=\nabla_{xy}^2g\left(x^k,y^k;\phi\right). 
\end{equation}
For notational brevity in the analysis, we define
\begin{align}
& H_{xy}(x,y):=\nabla_{xy}^2g\left(x, y\right),~~~~~~~~~~~~ H_{yy}(x,y):=\nabla_{yy}^2 g\left(x, y\right).
\end{align}
and rewrite the update of \eqref{eq.STABLE-H} as
\begin{subequations}\label{app.eq.STABLE-H}
\begin{align}
H_{xy}^k&:={\cal P}_{\{X:\|X\|\leq C_{g_{xy}}\}}\left\{\hat{H}_{xy}^k \right\}~~~{\rm with}	~~~\hat{H}_{xy}^k\!:=\! (1-\tau_k)( H_{xy}^{k-1}\!-\!h_{xy}^{k-1}(\phi^k)) \!+\!h_{xy}^k(\phi^k)                  \label{app.eq.STABLE-H1}\\
H_{yy}^k&:={\cal P}_{\{X:X\succeq \mu_g I\}}\left\{\hat{H}_{yy}^k\right\}~~~{\rm with}	~~~\hat{H}_{yy}^k :=  (1-\tau_k) \big( H_{yy}^{k-1} -h_{yy}^{k-1}(\phi^k)\big)+h_{yy}^k(\phi^k).	 \label{app.eq.STABLE-H3}
\end{align}
\end{subequations}

To analyze the approximation error of $H_{xy}^k$, we decompose it into
\begin{align}\label{eq.scsc-pf-1}
 \EE\Big[\|H_{xy}^k&-H_{xy}(x^k,y^k)\|^2\big|{\cal F}^k\Big]
   \leq\EE\left[\|\hat{H}_{xy}^k-H_{xy}(x^k,y^k)\|^2\big|{\cal F}^k\right]\nonumber\\
    &=\left\|\EE\left[\hat{H}_{xy}^k-H_{xy}(x^k,y^k)|{\cal F}^k\right]\right\|^2 + \sum\limits_{i,j}\Var\left[(\hat{H}_{xy}^k-H_{xy}(x^k,y^k))_{i,j}|{\cal F}^k\right]
\end{align}
where the inequality holds since the projection onto the convex set $\{X:X\succeq \mu_g I\}$ is non-expansive, and the equality comes from the bias-variance decomposition that $\Var[X]=\EE[X^2]-\EE[X]^2$ for any random variable $X$.

We first analyze the bias term in \eqref{eq.scsc-pf-1} by
\begin{align}\label{eq.scsc-pf-1-1}
    &\EE\left[\hat{H}_{xy}^k-H_{xy}(x^k,y^k)|{\cal F}^k\right]\nonumber\\
    \stackrel{\eqref{eq.STABLE-H}}{=}~ &\EE\left[(1-\tau_k) \left( H_{xy}^{k-1}+ h_{xy}^k(\phi^k) - h_{xy}^{k-1}(\phi^k)\right)+\tau_k h_{xy}^k(\phi^k)-H_{xy}(x^k,y^k)|{\cal F}^k\right]\nonumber\\
    =~&(1-\tau_k) \left( H_{xy}^{k-1}+ H_{xy}(x^k,y^k) - H_{xy}(x^{k-1},y^{k-1}) \right)+\tau_k H_{xy}(x^k,y^k)-H_{xy}(x^k,y^k)\nonumber\\
    =~&(1-\tau_k)\left(H_{xy}^{k-1}-H_{xy}(x^{k-1},y^{k-1})\right).
\end{align}
The variance term in \eqref{eq.scsc-pf-1} follows
\begin{align}\label{eq.scsc-pf-1-2}
    &\sum\limits_{i,j}\Var\left[(\hat{H}_{xy}^k-H_{xy}(x^k,y^k))_{i,j}|{\cal F}^k\right]=\sum\limits_{i,j}\Var\left[(\hat{H}_{xy}^k)_{i,j}|{\cal F}^k\right]\nonumber\\
  \stackrel{\eqref{app.eq.STABLE-H1}}{=}  &\sum\limits_{i,j}\Var\left[(1-\tau_k)(h_{xy}^k(\phi^k) - h_{xy}^{k-1}(\phi^k))_{i,j}+\tau_k (h_{xy}^k(\phi^k))_{i,j}|{\cal F}^k\right]\nonumber\\
    \leq&~2(1-\tau_k)^2\sum\limits_{i,j}\Var\left[(h_{xy}^k(\phi^k) - h_{xy}^{k-1}(\phi^k))_{i,j}|{\cal F}^k\right] + 2\tau_k^2\sum\limits_{i,j}\Var\left[(h_{xy}^k(\phi^k))_{i,j}|{\cal F}^k\right]\nonumber\\
     \stackrel{(a)}{\leq}&2(1-\tau_k)^2\EE\left[\|h_{xy}^k(\phi^k) - h_{xy}^{k-1}(\phi^k)\|^2|{\cal F}^k\right] + 2\tau_k^2\sum\limits_{i,j}\Var\left[(h_{xy}^k(\phi^k))_{i,j}|{\cal F}^k\right]\nonumber\\
     \stackrel{(b)}{\leq}&2(1-\tau_k)^2\left(\bar{L}_{g_{xy}}^2+L_{g_{xy}}^2\right)\left(\|x^k-x^{k-1}\|^2+\|y^k-y^{k-1}\|^2\right) + 2\tau_k^2\sigma_{g_{xy}}^2
\end{align}
where (a) uses $\Var[X]\leq\EE[X]^2$ and (b) follows from Assumptions 1 and 3. 

Therefore, plugging \eqref{eq.scsc-pf-1-1} and \eqref{eq.scsc-pf-1-2} into \eqref{eq.scsc-pf-1}, we have
\begin{align*}
    \EE[\|H_{xy}^k-H_{xy}(x^k,y^k)\|^2|{\cal F}^k]&\leq(1-\tau_k)^2\left\|H_{xy}^{k-1}-H_{xy}(x^{k-1},y^{k-1})\right\|^2+ 2\tau_k^2\sigma_{g_{xy}}^2 \\
    &+2(1-\tau_k)^2\left(\bar{L}_{g_{xy}}^2+L_{g_{xy}}^2\right)\left(\|x^k-x^{k-1}\|^2+\|y^k-y^{k-1}\|^2\right). 
\end{align*}

Similarly, we can derive the approximation error of $H_{yy}^k$ as
\begin{align*}
    \EE[\|H_{yy}^k-H_{yy}(x^k,y^k)\|^2|{\cal F}^k]&\leq(1-\tau_k)^2\|H_{yy}^{k-1}-H_{yy}(x^{k-1},y^{k-1})\|^2+ 2\tau_k^2\sigma_{g_{yy}}^2\nonumber\\
    &+2(1-\tau_k)^2\left(\bar{L}_{g_{yy}}^2+L_{g_{yy}}^2\right)\left(\|x^k-x^{k-1}\|^2+\|y^k-y^{k-1}\|^2\right). 
\end{align*}
The proof is then complete.
\end{proof}

\section{Proof of Theorem \ref{theorem1}}
\begin{proof}
Using Lemmas \ref{lemma3}-\ref{scsc-lemma2}, we, respectively, bound the four difference terms in \eqref{eq.diff-Lya} and obtain
\begin{align*}\label{eq.thm1-pf-1}
   \EE[\mathbb{V}^{k+1}] - \EE[\mathbb{V}^k] 
    &\leq - \frac{(\mu_F+\rho)\rho\alpha_k}{4}\EE[\|\widehat{x}(x^k)-x^k\|^2]  - \left(\frac{\mu_g L_g}{\mu_g + L_g}\beta_k - \tilde{c}_1\tilde{c}_2\frac{\alpha_k^2}{\beta_k}- \frac{2L_f^2\rho\alpha_k}{\mu_F+\rho}\right)\EE[\|y^k-y^*(x^k)\|^2] \\
    &\quad -\left(\tau_{k+1} - \tilde{c}_1\tilde{c}_2\frac{\alpha_k^2}{\beta_k} -  \frac{4\rho\alpha_kC_{g_{xy}}^2C_{f_y}^2}{(\mu_F+\rho)\mu_g^4}\right)\EE[\|H_{yy}^k - H_{yy}(x^k,y^k)\|^2]\\
    &\quad -\left(\tau_{k+1} - \tilde{c}_1\tilde{c}_2\frac{\alpha_k^2}{\beta_k} - \frac{4\rho\alpha_kC_{f_y}^2}{(\mu_F+\rho)\mu_g^2}\right)\EE[\|H_{xy}^k-H_{xy}(x^k,y^k)\|^2]\\
    &\quad + 2 \rho\alpha_k^2\left(C_{f_x}^2 + \left(\frac{C_{g_{xy}}}{\mu_g}\right)^2C_{f_y}^2\right) + \left(1+\frac{\mu_g L_g}{\mu_g + L_g}\beta^k\right)\beta_k^2\sigma_{g_y}^2 + \tilde{c}_1\tilde{c}_2\frac{\alpha_k^4}{\beta_k}+ 4\tau_{k+1}^2\sigma_{g_y}^2 \\
    &\quad  + 4(1-\tau_{k+1})^2\tilde{c}_3\alpha_k^2\left(C_{f_x}^2 + \left(\frac{C_{g_{xy}}}{\mu_g}\right)^2C_{f_y}^2\right) + 2(1-\tau_{k+1})^2\tilde{c}_3\EE[\|y^{k+1}-y^k\|^2].\! \numberthis
\end{align*}
where the constant is defined as $\tilde{c}_3:=\bar{L}_{g_{xy}}^2+L_{g_{xy}}^2+\bar{L}_{g_{yy}}^2+L_{g_{yy}}^2$. 

Note that using the $y$-update \eqref{eq.STABLE3}, we also have
\begin{align*}\label{eq.thm1-pf-2}
 \EE[\|y^{k+1}-y^k\|^2] 
&=\EE\big[\big\|\beta_k h_g^k - (H_{yy}^k)^{-1}H_{xy}^k(x^{k+1}-x^k)\big\|^2\big]\\
&\leq 2\beta_k^2\EE\big[\|h_g^k\|^2\big] + 2\EE\big[ \|(H_{yy}^k)^{-1}\|^2\|H_{xy}^k\|^2\big\|x^{k+1}-x^k\big\|^2\big]\\
&\stackrel{(a)}{\leq} 2\beta_k^2\EE\big[\|\nabla_yg(x^k,y^k)\|^2\big]+ 2\beta_k^2\sigma_{g_y}^2  + 2\EE\big[ \|(H_{yy}^k)^{-1}\|^2\|H_{xy}^k\|^2\big\|x^{k+1}-x^k\big\|^2\big]\\
&\stackrel{(b)}{\leq} 2\beta_k^2\EE\big[\|\nabla_yg(x^k,y^k)\|^2\big]+ 2\beta_k^2\sigma_{g_y}^2  + 2\left(\frac{C_{g_{xy}}}{\mu_g}\right)^2\EE\big[  \big\|x^{k+1}-x^k\big\|^2\big]\\
&\stackrel{(c)}{\leq} 4\beta_k^2L_g^2\EE[\|y^k-y^*(x^k)\|^2] + 2\beta_k^2\sigma_{g_y}^2 + 2\left(\frac{C_{g_{xy}}}{\mu_g}\right)^2\EE[\|x^{k+1}-x^k\|^2] \numberthis
\end{align*}
where (a) follows from $\EE[X^2] =\Var[X]+\EE[X]^2$ and Assumption 3, (b) uses the upper and lower projections of $H_{xy}^k$ and $H_{yy}^k$ in \eqref{eq.STABLE-H}, and (c) is due to $\nabla_yg(x^k,y^*(x^k))=0$ as well as Assumption 1.

Selecting parameter $\tau_k=\frac{1}{\sqrt{K}}$, using \eqref{eq.lemma2-pf-9-2} to bound $\EE[\|x^{k+1}-x^k\|^2]$ and using \eqref{eq.thm1-pf-1}-\eqref{eq.thm1-pf-2}, 
we have
\begin{align*}\label{eq.thm1-pf-3}
  \EE[\mathbb{V}^{k+1}] - \EE[\mathbb{V}^k] 
    &\leq - \frac{(\mu_F+\rho)\rho\alpha_k}{4}\EE[\|\widehat{x}(x^k)-x^k\|^2]  +  \alpha_k^2\left(2\rho + 4\tilde{c}_3 +  8\tilde{c}_3\left(\frac{C_{g_{xy}}}{\mu_g}\right)^2\right)\left(C_{f_x}^2 + \left(\frac{C_{g_{xy}}}{\mu_g}\right)^2C_{f_y}^2\right)\\
    & \quad - \left(\frac{\mu_g L_g}{\mu_g + L_g}\beta_k - \tilde{c}_1\tilde{c}_2\frac{\alpha_k^2}{\beta_k}- \frac{2L_f^2\rho\alpha_k}{\mu_F+\rho}-8\tilde{c}_3\beta_k^2L_g^2\right)\EE[\|y^k-y^*(x^k)\|^2] \\
    &\quad -\left(\frac{1}{\sqrt{K}} - \tilde{c}_1\tilde{c}_2\frac{\alpha_k^2}{\beta_k} - \frac{4\rho C_{g_{xy}}^2C_{f_y}^2\alpha_k}{(\mu_F+\rho)\mu_g^4}\right)\EE[\|H_{yy}^k - H_{yy}(x^k,y^k)\|^2]\\
    &\quad -\left(\frac{1}{\sqrt{K}} - \tilde{c}_1\tilde{c}_2\frac{\alpha_k^2}{\beta_k} - \frac{4\rho C_{f_y}^2\alpha_k}{(\mu_F+\rho)\mu_g^2}\right)\EE[\|H_{xy}^k-H_{xy}(x^k,y^k)\|^2]\\
    &\quad + \left(1+\frac{\mu_g L_g}{\mu_g + L_g}\beta^k\right)\beta_k^2\sigma_{g_y}^2 + \tilde{c}_1\tilde{c}_2\frac{\alpha_k^4}{\beta_k} + \frac{4\sigma_{g_y}^2}{K} + 4\tilde c_3\beta_k^2\sigma_{g_y}^2.\numberthis
\end{align*}

Choosing the stepsize $\alpha_k$ as 
\eqref{eq.stepsize},  
it will lead to  (cf. $c:=\tilde{c}_1\tilde{c}_2$)
\begin{subequations}\label{eq.coeff}
    \begin{align}
   \frac{1}{\sqrt{K}} - \tilde{c}_1\tilde{c}_2\frac{\alpha_k^2}{\beta_k} - \frac{4\rho C_{g_{xy}}^2C_{f_y}^2\alpha_k}{(\mu_F+\rho)\mu_g^4} &\stackrel{(a)}{\geq}  \frac{1}{\sqrt{K}} - \tilde{c}_1\tilde{c}_2 \alpha_k  - \frac{4\rho C_{g_{xy}}^2C_{f_y}^2\alpha_k}{(\mu_F+\rho)\mu_g^4}\stackrel{(b)}{\geq} 0\\
     \frac{1}{\sqrt{K}} - \tilde{c}_1\tilde{c}_2\frac{\alpha_k^2}{\beta_k} - \frac{4\rho C_{f_y}^2\alpha_k}{(\mu_F+\rho)\mu_g^2}&\stackrel{(c)}{\geq}  \frac{1}{\sqrt{K}} - \tilde{c}_1\tilde{c}_2 \alpha_k  - \frac{4\rho C_{f_y}^2\alpha_k}{(\mu_F+\rho)\mu_g^2}\stackrel{(d)}{\geq} 0
\end{align}
where both (a) and (c) follow from $\alpha_k\leq\beta_k$ in \eqref{eq.stepsize_2}; and (b) and (d) follow from the second and the third terms in \eqref{eq.stepsize_2}. In addition, choosing the stepsize $\beta_k$ as \eqref{eq.stepsize} will lead to
\begin{align}
    \frac{\mu_g L_g}{\mu_g + L_g}\beta_k - \tilde{c}_1\tilde{c}_2\frac{\alpha_k^2}{\beta_k}- \frac{2L_f^2\rho\alpha_k}{\mu_F+\rho}-8\tilde c_3\beta_k^2L_g^2&\stackrel{(e)}{\geq}  \frac{\mu_g L_g}{\mu_g + L_g}\beta_k - (\tilde{c}_1\tilde{c}_2+\frac{2L_f^2\rho}{\mu_F+\rho}) \alpha_k -8\tilde c_3\beta_k^2L_g^2\nonumber\\
    &\stackrel{(f)}{\geq} \frac{\mu_g L_g\beta_k }{2(\mu_g+L_g)}-8\tilde c_3\beta_k^2L_g^2\stackrel{(g)}{\geq} \frac{\mu_g L_g\beta_k}{4(\mu_g + L_g)} 
 \end{align} 
\end{subequations}
where (e) follows from $\alpha_k\leq\beta_k$ in \eqref{eq.stepsize_2}, (f) is due to the last terms in \eqref{eq.stepsize_2}, and (g) uses \eqref{eq.stepsize_1}.

Using \eqref{eq.coeff} to cancel terms in \eqref{eq.thm1-pf-3}, we are able to get
\begin{align*}\label{eq.thm1-pf-4}
    \EE[\mathbb{V}^{k+1}]  - \EE[\mathbb{V}^k]\leq - \frac{\mu_g L_g\beta_k}{4(\mu_g + L_g)}\EE[\|y^k-y^*(x^k)\|^2]  - \frac{(\mu_F+\rho)\rho\alpha_k}{4}\EE[\|\widehat{x}(x^k)-x^k\|^2] + {\cal O}\left(\frac{1}{K}\right)  \numberthis
\end{align*}
 from which we can reach Theorem \ref{theorem1} after telescoping the both sides of \eqref{eq.thm1-pf-4}. 
 \end{proof}

 \section{Proof of Theorem \ref{theorem2}}


Slightly different from the Lyapunov function \eqref{eq.Lya}, we define the following Lyapunov function
\begin{align*}
    \mathbb{V}^k := \|x^k-x^*\|^2 + \|y^k - y^*(x^k)\|^2 +\|H_{yy}^k - \nabla_{yy}^2g(x^k, y^k)\|^2 + \|H_{xy}^k - \nabla_{xy}^2g(x^k, y^k)\|^2.
\end{align*}

 \begin{lemma}
\label{lemma4}
Suppose Assumptions 1--3 hold and $F(x)$ is $\mu$-strongly convex. Then $x^k$ satisfies 
\begin{align*}\label{eq.lemma4}
\EE[\|x^{k+1}-x^*\|^2] 
&\leq (1-\mu\alpha_k)\EE[\|x^k-x^*\|^2] + \frac{2L_f^2}{\mu}\alpha_k\EE[\|y^k-y^*(x^k)\|^2] + \alpha_k^2\EE[\|\bar{h}_f^k\|^2]\\
      & + \frac{4C_{g_{xy}}^2C_{f_y}^2}{\mu_g^4\mu}\alpha_k\EE[\|H_{yy}^k - H_{yy}(x^k,y^k)\|^2]  + \frac{4C_{f_y}^2}{\mu_g^2\mu}\alpha_k\EE[\|H_{xy}^k-H_{xy}(x^k,y^k)\|^2]\numberthis
\end{align*}
where $L_f, L_F$ are defined in Lemma \ref{lemma_lip}, and $C_{g_{xy}}$ is the projection radius of $H_{xy}^k$ in \eqref{eq.STABLE-H1}. 
\end{lemma}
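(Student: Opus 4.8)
The plan is to mirror the proof of Lemma~\ref{lemma3}, replacing the Moreau-envelope machinery with a direct strong-convexity argument, since in the strongly convex case we can track $\|x^k - x^*\|^2$ itself rather than a near-stationarity surrogate. First I would start from the $x$-update written as $x^{k+1} = \mathcal{P}_{\mathcal{X}}(x^k - \alpha_k \bar h_f^k)$ in \eqref{eq.STABLE-h}, use that $x^* = \argmin_{x\in\mathcal{X}}F(x) \in \mathcal{X}$ so $x^* = \mathcal{P}_{\mathcal{X}}(x^*)$, and invoke the non-expansiveness of the projection to get
\[
\|x^{k+1} - x^*\|^2 \le \|x^k - x^*\|^2 - 2\alpha_k \langle \bar h_f^k, x^k - x^*\rangle + \alpha_k^2 \|\bar h_f^k\|^2.
\]
Taking the conditional expectation given $\mathcal{F}^k$ and using the bias formula $\EE_{\xi^k}[\bar h_f^k \mid \mathcal{F}^k] = \nabla_x f(x^k,y^k) - (H_{yy}^k)^{-1}H_{xy}^k \nabla_y f(x^k,y^k)$ already established in the proof of Lemma~\ref{lemma3} isolates the deterministic inner product to be controlled, while the last term is kept as $\alpha_k^2\EE[\|\bar h_f^k\|^2\mid\mathcal{F}^k]$.

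Second, I would decompose this inner product against the true upper-level gradient as $\langle \EE[\bar h_f^k \mid \mathcal{F}^k], x^k - x^*\rangle = \langle \nabla F(x^k), x^k - x^*\rangle + \langle \EE[\bar h_f^k \mid \mathcal{F}^k] - \nabla F(x^k), x^k - x^*\rangle$, and treat the two pieces separately. For the first piece, strong convexity of $F$ together with the first-order optimality condition $\langle \nabla F(x^*), x^k - x^*\rangle \ge 0$ (valid because $x^k \in \mathcal{X}$) yields $\langle \nabla F(x^k), x^k - x^*\rangle \ge \mu\|x^k - x^*\|^2$. The second piece I would split as $\EE[\bar h_f^k \mid \mathcal{F}^k] - \nabla F(x^k) = \big(\EE[\bar h_f^k \mid \mathcal{F}^k] - \overline{\nabla}_x f(x^k, y^k)\big) + \big(\overline{\nabla}_x f(x^k, y^k) - \nabla F(x^k)\big)$, using $\nabla F(x^k) = \overline{\nabla}_x f(x^k, y^*(x^k))$ from Proposition~\ref{prop1} and \eqref{grad-deter-3}.

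Third, I would bound each contribution to the bias. The lower-level term obeys $\|\overline{\nabla}_x f(x^k,y^k) - \nabla F(x^k)\| \le L_f\|y^k - y^*(x^k)\|$ by Lemma~\ref{lemma_lip}, while the Hessian-estimation term factors as the operator difference $\|(H_{yy}^k)^{-1}H_{xy}^k - H_{yy}(x^k,y^k)^{-1}H_{xy}(x^k,y^k)\|$ times $\|\nabla_y f(x^k,y^k)\|$, which I would control by $\|\nabla_y f\|\le C_{f_y}$ and the matrix-perturbation bound \eqref{eq.lemma2-pf-6} (supplying the $C_{g_{xy}}^2/\mu_g^4$ and $1/\mu_g^2$ factors). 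I would then apply Cauchy--Schwarz and Young's inequality, spending half of the descent term to absorb the cross term via $2\alpha_k\|\mathrm{bias}\|\,\|x^k-x^*\| \le \mu\alpha_k\|x^k-x^*\|^2 + (\alpha_k/\mu)\|\mathrm{bias}\|^2$. Combining the leftover $-\mu\alpha_k\|x^k-x^*\|^2$ with the leading term produces the $(1-\mu\alpha_k)$ contraction factor, and expanding $(\alpha_k/\mu)\|\mathrm{bias}\|^2$ with the factor-two splittings reproduces exactly the coefficients $2L_f^2/\mu$, $4C_{g_{xy}}^2 C_{f_y}^2/(\mu_g^4\mu)$, and $4C_{f_y}^2/(\mu_g^2\mu)$; taking full expectation then yields \eqref{eq.lemma4}.

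The main obstacle is not any single estimate—each is routine—but the careful bias decomposition in the second and third steps: one must cleanly separate the error due to the inexact lower-level iterate $y^k$ from the error due to the recursively estimated second-order matrices $H_{xy}^k, H_{yy}^k$, and thread the matrix-perturbation bound through so that the final constants line up. This is precisely the chain of estimates culminating in \eqref{product} in the proof of Lemma~\ref{lemma3}; the strongly convex case differs only in that the Young parameter is tied to $\mu$ rather than to $\mu_F+\rho$, and that the constrained optimality of $x^*$ replaces the Moreau-envelope inequality.
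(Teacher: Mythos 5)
Your proposal is correct and follows essentially the same route as the paper's own proof: non-expansiveness of the projection, splitting the inner product into $\langle \nabla F(x^k), x^k-x^*\rangle$ (handled by strong convexity plus the optimality condition at $x^*$) and a bias term absorbed via Young's inequality with parameter $\mu$, then decomposing the bias into the $L_f\|y^k-y^*(x^k)\|$ piece from Lemma~\ref{lemma_lip} and the Hessian-estimation piece controlled by \eqref{eq.lemma2-pf-6} and $\|\nabla_y f\|\le C_{f_y}$. The constants you trace through match the lemma exactly, so no gap remains.
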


\begin{proof}
We start with
\begin{align*}\label{eq.lemma4-pf-1}
    \EE[\|x^{k+1}-x^*\|^2|{\cal F}^k]
    &\stackrel{(a)}{\leq}\EE[\|x^k-\alpha_k\bar{h}_f^k-x^*\|^2|{\cal F}^k]\\
    &=\|x^k-x^*\|^2 - 2\alpha_k\dotp{x^k-x^*, \EE[\bar{h}_f^k|{\cal F}^k]} + \alpha_k^2\EE[\|\bar{h}_f^k\|^2|{\cal F}^k]\\
    &=\|x^k-x^*\|^2 - 2\alpha_k\dotp{x^k-x^*, \nabla F(x^k)} \\
    &\quad+ 2\alpha_k\dotp{x^k-x^*, \nabla F(x^k)-\EE[\bar{h}_f^k|{\cal F}^k]} + \alpha_k^2\EE[\|\bar{h}_f^k\|^2|{\cal F}^k]\\
    &\stackrel{(b)}{\leq}\|x^k-x^*\|^2 - 2\alpha_k\dotp{x^k-x^*, \nabla F(x^k)-\nabla F(x^*)}\\
      &\quad + 2\alpha_k\dotp{x^k-x^*, \nabla F(x^k)-\EE[\bar{h}_f^k|{\cal F}^k]} + \alpha_k^2\EE[\|\bar{h}_f^k\|^2|{\cal F}^k] \numberthis
\end{align*}
where (a) follows the fact that ${\cal P}_{\cal X}$ is non-expansive, and (b) follows the optimality condition that $\dotp{\nabla F(x^*), x-x^*}\geq 0$ for any $x\in{\cal X}$. 

Using the $\mu$-strong convexity of $F(x)$, it follows that 
\begin{equation}
    - \dotp{x^k-x^*, \nabla F(x^k)-\nabla F(x^*)}\leq -\mu \|x^k-x^*\|^2
\end{equation}
plugging which into \eqref{eq.lemma4-pf-1} leads to
\begin{align*} 
    \EE\left[\|x^{k+1}-x^*\|^2|{\cal F}^k\right]
    & \leq (1-2\mu\alpha_k)\|x^k-x^*\|^2 + 2\alpha_k\dotp{x^k-x^*, \nabla F(x^k)-\EE[\bar{h}_f^k|{\cal F}^k]} + \alpha_k^2\EE\left[\|\bar{h}_f^k\|^2|{\cal F}^k\right]\\
    &\stackrel{(c)}{\leq} (1-\mu\alpha_k)\|x^k-x^*\|^2 + \frac{\alpha_k}{\mu}\left\|\nabla F(x^k)-\EE[\bar{h}_f^k|{\cal F}^k]\right\|^2 + \alpha_k^2\EE\left[\|\bar{h}_f^k\|^2|{\cal F}^k\right] \numberthis
\end{align*}
where (c) uses the Young's inequality. 

The approximation error of $\bar{h}_f^k$ can be bounded by
\begin{align*}\label{eq.lemma3-pf-2}
   &\quad~ \big\|\nabla F(x^k)-\EE[\bar{h}_f^k|{\cal F}^k]\big\|^2\\
   &\leq 2\big\|\nabla F(x^k) - \overline{\nabla}f(x^k,y^k)\big\|^2+ 2\EE\big[\|\overline{\nabla}f(x^k,y^k) - \EE_{\xi^k}[\bar{h}_f^k]\|^2|{\cal F}^k\big]\\
    & \stackrel{(a)}{\leq} 2L_f^2\big\|y^k-y^*(x^k)\big\|^2 + 2\EE\big[\|\overline{\nabla}f(x^k,y^k) - \EE_{\xi^k}[\bar{h}_f^k]\|^2|{\cal F}^k\big]\\
 & \stackrel{(b)}{\leq} 2L_f^2\big\|y^k-y^*(x^k)\big\|^2 + 2\big\|(H_{yy}^k)^{-1}H_{xy}^k - H_{yy}(x^k,y^k)^{-1}H_{xy}(x^k,y^k)\big\|^2\big\|\nabla_y f(x^k,y^k)\big\|^2\\   
    & \stackrel{(c)}{\leq} 2L_f^2\|y^k-y^*(x^k)\|^2 + \frac{4C_{g_{xy}}^2C_{f_y}^2}{\mu_g^4}\EE[\|H_{yy}^k - H_{yy}(x^k,y^k)\|^2|{\cal F}^k] + \frac{4C_{f_y}^2}{\mu_g^2}\EE[\|H_{xy}^k-H_{xy}(x^k,y^k)\|^2|{\cal F}^k] \numberthis
\end{align*}
where (a) follows from Lemma \ref{lemma_lip}, (b) uses the fact that 
\begin{equation}
	\EE_{\xi^k}[\bar{h}_f^k|{\cal F}^k] = \nabla_xf\left(x^k, y^k\right)-(H_{yy}^k)^{-1}H_{xy}^k\nabla_y f\left(x^k, y^k\right)
\end{equation}
and (c) follows the same steps of \eqref{eq.lemma2-pf-6} and Assumption 3. Plugging \eqref{eq.lemma3-pf-2} into the above completes the proof.
\end{proof}

Similar to \eqref{eq.diff-Lya}, we first quantify the difference between consecutive Lyapunov functions as
\begin{align*}\label{eq.diff-Lya2}
  \mathbb{V}^{k+1}  -  \mathbb{V}^k   =  & \underbracket{\|x^{k+1}-x^*\|^2 - \|x^k-x^*\|^2}_{\rm Lemma~\ref{lemma4}}  
 + \underbracket{ \|y^{k+1} - y^*(x^{k+1})\|^2 - \|y^k - y^*(x^k)\|^2 }_{\rm Lemma~\ref{lemma2}}\\
&  + \underbracket{ \|H_{yy}^{k+1} - \nabla_{yy}^2g(x^{k+1}, y^{k+1})\|^2 \!-\! \|H_{yy}^k - \nabla_{yy}^2g(x^k, y^k)\|^2 }_{\rm Lemma~\ref{scsc-lemma2}}\\
&  + \underbracket{ \|H_{xy}^{k+1} - \nabla_{xy}^2g(x^{k+1}, y^{k+1})\|^2 \!-\!  \|H_{xy}^k - \nabla_{xy}^2g(x^k, y^k)\|^2}_{\rm Lemma~\ref{scsc-lemma2}}.\numberthis
\end{align*}

Using Lemmas \ref{lemma2}-\ref{scsc-lemma2} and \ref{lemma4} and defining  $\tilde{c}_3:=\bar{L}_{g_{xy}}^2 + L_{g_{xy}}^2 + \bar{L}_{g_{yy}}^2 + L_{g_{yy}}^2$, we obtain
\begin{align*}\label{eq.themm2-pf-1}
    \EE[\mathbb{V}^{k+1}]-\EE[\mathbb{V}^k]&\leq -\mu\alpha_k\EE[\|x^k-x^*\|^2] -\left(\frac{\mu_g L_g\beta_k}{\mu_g + L_g} - \tilde{c}_1\tilde{c}_2\frac{\alpha_k^2}{\beta_k} - \frac{2L_f^2\alpha_k}{\mu}\right)\EE[\|y^k-y^*(x^k)\|^2]\\
    &\quad -\left(\tau_{k+1} - \tilde{c}_1\tilde{c}_2\frac{\alpha_k^2}{\beta_k}  - \frac{4C_{g_{xy}}^2C_{f_y}^2}{\mu_g^4\mu}\alpha_k\right)\EE[\|H_{yy}^k - H_{yy}(x^k,y^k)\|^2]\\
    &\quad -\left(\tau_{k+1} - \tilde{c}_1\tilde{c}_2\frac{\alpha_k^2}{\beta_k}  - \frac{4C_{f_y}^2}{\mu_g^2\mu}\alpha_k\right)\EE[\|H_{xy}^k-H_{xy}(x^k,y^k)\|^2]\\
    &\quad + \alpha_k^2\EE[\|\bar{h}_f^k\|^2] + \left(1+\frac{\mu_g L_g}{\mu_g + L_g}\beta^k\right)\beta_k^2\sigma_{g_y}^2 +  \tilde{c}_1\tilde{c}_2\frac{\alpha_k^4}{\beta_k}+ 4\tau_{k+1}^2\sigma_{g_y}^2  \\
    &\quad + 2(1-\tau_{k+1})^2\tilde{c}_3\EE[\|x^{k+1}-x^k\|^2] + 2(1-\tau_{k+1})^2\tilde{c}_3\EE[\|y^{k+1}-y^k\|^2]. \numberthis
\end{align*}

Plugging \eqref{eq.thm1-pf-2} and \eqref{eq.lemma2-pf-9} into \eqref{eq.themm2-pf-1}, we have
\begin{align*}\label{eq.themm2-pf-2}
    \EE[\mathbb{V}^{k+1}]-\EE[\mathbb{V}^k]&\leq -\mu\alpha_k\EE[\|x^k-x^*\|^2]+  \underbracket{\Bigg(2+4\tilde{c}_3+8\tilde{c}_3\Big(\frac{C_{g_{xy}}}{\mu_g^2}\Big)^2\Bigg)\! \Bigg(C_{f_x}^2 + \Big(\frac{C_{g_{xy}}}{\mu_g}\Big)^2C_{f_y}^2\Bigg)}_{\tilde{c}_4:=}\alpha_k^2\\
    &\quad -\left(\frac{\mu_g L_g\beta_k}{\mu_g + L_g} - \tilde{c}_1\tilde{c}_2\frac{\alpha_k^2}{\beta_k} - \frac{2L_f^2\alpha_k}{\mu} - 8\beta_k^2L^2L_g^2\right)\EE[\|y^k-y^*(x^k)\|^2]\\
    &\quad -\left(\tau_{k+1} - \tilde{c}_1\tilde{c}_2\frac{\alpha_k^2}{\beta_k} - \frac{4C_{g_{xy}}^2C_{f_y}^2}{\mu_g^4\mu}\alpha_k\right)\EE[\|H_{yy}^k - H_{yy}(x^k,y^k)\|^2]\\
    &\quad -\left(\tau_{k+1} - \tilde{c}_1\tilde{c}_2\frac{\alpha_k^2}{\beta_k} - \frac{4C_{f_y}^2}{\mu_g^2\mu}\alpha_k\right)\EE[\|H_{xy}^k-H_{xy}(x^k,y^k)\|^2]\\
    &\quad + \left(1+\frac{\mu_g L_g}{\mu_g + L_g}\beta^k\right)\beta_k^2\sigma_{g_y}^2 + \tilde{c}_1\tilde{c}_2\frac{\alpha_k^4}{\beta_k} + 4\tau_{k+1}^2\sigma_{g_y}^2 + 8L_g^2\beta_k^2\sigma_{g_y}^2.\numberthis
\end{align*}

We choose the stepsizes $\alpha_k, \beta_k, \tau_k$ as \eqref{eq.stepsize2} 
to guarantee that (cf. $c:=\tilde{c}_1\tilde{c}_2$)
\begin{align}\label{eq.coeff2}
&\mathsf{(a)}~~~\tau_{k+1} - \tilde{c}_1\tilde{c}_2\frac{\alpha_k^2}{\beta_k} -\frac{4C_{g_{xy}}^2C_{f_y}^2}{\mu_g^4\mu}\alpha_k\geq  \frac{\beta_k}{4};~~~\mathsf{(b)}~~~
\tau_{k+1} - \tilde{c}_1\tilde{c}_2\frac{\alpha_k^2}{\beta_k} - \frac{4C_{f_y}^2}{\mu_g^2\mu}\alpha_k\geq \frac{\beta_k}{4}\nonumber\\
&\mathsf{(c)}~~~\frac{\mu_g L_g}{\mu_g + L_g}\beta_k - \tilde{c}_1\tilde{c}_2\frac{\alpha_k^2}{\beta_k} - \frac{2L_f^2}{\mu}\alpha_k - 8\beta_k^2L^2L_g^2\geq \frac{\mu_g L_g}{4(\mu_g + L_g)}.
\end{align}

Therefore, plugging \eqref{eq.coeff2} into \eqref{eq.themm2-pf-2}, we have
\begin{align*}
    \EE[\mathbb{V}^{k+1}]-\EE[\mathbb{V}^k]&\leq -\mu\alpha_k\EE\Big[\|x^k-x^*\|^2\Big] - \frac{\mu_g L_g}{4(\mu_g + L_g)}\beta_k\EE\Big[\|y^k-y^*(x^k)\|^2\Big]\\
    &\quad  - \frac{\beta_k}{4}\EE\Big[\|H_{yy}^k - H_{yy}(x^k,y^k)\|^2\Big]- \frac{\beta_k}{4}\EE\Big[\|H_{xy}^k - H_{xy}(x^k,y^k)\|^2\Big] + \tilde{c}_6\beta_k^2\\
    &\leq-\tilde{c}_5\beta_k\EE[\mathbb{V}^k] + \tilde{c}_6\beta_k^2\numberthis
\end{align*}
where the first and second inequalities hold since we define
\begin{align*}
&\tilde{c}_5:=\min\left\{\frac{\mu\alpha_k}{\beta_k}, \frac{\mu_gL_g}{4(\mu_g+L_g)}, \frac{1}{4}\right\}={\cal O}(1)\\
&\tilde{c}_6:=\left(1+\frac{\mu_g L_g}{\mu_g + L_g}\beta^k\right)\sigma_{g_y}^2 + \frac{\alpha_k^2}{4\beta_k} + 4\sigma_{g_y}^2 + 8L_g^2\sigma_{g_y}^2+\tilde{c}_4={\cal O}(1).\numberthis
\end{align*}

If we choose $\beta_k=\frac{2}{\tilde{c}_5(K_0+k)}$, where $K_0$ is a sufficiently large constant, then we have
\begin{align*}
\EE[\mathbb{V}^K]&\leq\prod\limits_{k=0}^{K-1}(1-\tilde{c}_5\beta_k)\mathbb{V}^0 + \tilde{c}_6\sum_{k=0}^{K-1}\beta_k^2\prod\limits_{j=k+1}^{k-1}(1-\tilde{c}_5\beta_j)\\
&\leq\frac{(K_0-2)(K_0-1)}{(K_0+K-2)(K_0+K-1)}\mathbb{V}^0 + \frac{\tilde{c}_6}{\tilde{c}_5^2}\sum_{k=0}^{K-1}\frac{4}{(k+K_0)^2}\frac{(k+K_0-1)(k+K_0)}{(K+K_0-2)(K+K_0-1)}\\
&\leq\frac{(K_0-1)^2}{(K_0+K-1)^2}\mathbb{V}^0 + \frac{4\tilde{c}_6 K}{\tilde{c}_5^2(K+K_0-1)^2}\numberthis
\end{align*}
\vspace{-0.2cm}
from which the proof is complete.
\vspace{-0.2cm}

\end{document}